\documentclass[12pt,twoside,reqno]{amsart}
\usepackage{mathptmx, amsmath, amssymb, amsfonts, amsthm, enumerate, mathrsfs}
\usepackage{xcolor}
\usepackage[colorlinks=true,citecolor=blue]{hyperref}

\hypersetup{
  pdftitle={Directional Subdifferentials of the Value Function in Asplund Spaces},
  pdfauthor={Weihao Mao and Jane J. Ye}
}

\newtheorem{theorem}{Theorem}[section]

\newtheorem{proposition}[theorem]{Proposition}
\newtheorem{corollary}[theorem]{Corollary}

\theoremstyle{definition}
\newtheorem{definition}[theorem]{Definition}

\newtheorem{example}[theorem]{Example}

\newtheorem{remark}[theorem]{Remark}
\numberwithin{equation}{section}

\begin{document}
\title[Directional Subdifferentials of the Value Function] 
{Directional Subdifferentials of the Value Function in Asplund Spaces}

\author[W. Mao]{Weihao Mao}
\address{Department of Mathematics and Statistics, University of Victoria, Victoria, BC V8W 2Y2, Canada}
\email{mwhaea123456@gmail.com}

\author[J. J. Ye]{Jane J. Ye}
\address{Department of Mathematics and Statistics, University of Victoria, Victoria, BC V8W 2Y2, Canada}
\email{janeye@uvic.ca}
\thanks{Corresponding author: Jane J. Ye.}

\subjclass[2020]{49J52, 49J53, 49K40, 49K15}
\keywords{Directional limiting subdifferential; directional singular subdifferential; value function; Asplund space}

\begin{abstract}
Directional subdifferentials of the value function provide a quantitative measure of optimal value response to perturbations. While existing results are largely limited to finite-dimensional settings, this paper develops a comprehensive variational framework in Asplund spaces. We establish essential directional calculus rules, extending directional nonsmooth analysis to infinite dimensions. 
To address the lack of compactness of bounded sets in infinite-dimensional spaces, we introduce a new directional condition, under which we derive upper estimates for directional limiting and singular subdifferentials of the value function. These results provide a refined analytical foundation for sensitivity analysis in infinite-dimensional hierarchical systems. 
\end{abstract}

\maketitle

\section{Introduction}

In this paper, we consider a parametric optimization problem of the form 
\begin{equation}\label{eq: basic model of lower level}
    \begin{split}
        &\min_{y \in Y}~f(x,y)\quad \text{s.t.}\quad P(x,y)\in \Lambda,
    \end{split}
\end{equation}
where $x\in X$, $f:X\times Y\to \mathbb{R}$, $P:X\times Y\to Z$ are locally Lipschitzian functions, and $\Lambda\subset Z$ is a closed set, with $X,Y,Z$ being Asplund spaces (e.g., $L^p$ spaces with $1<p<\infty$, Hilbert spaces), unless otherwise specified.   Problem~\eqref{eq: basic model of lower level} provides a general optimization framework that encompasses a broad class of models, including general parametric optimization problems; see, e.g., \cite{bonnans2013perturbation} or a mathematical program with geometric constraints; see, e.g., \cite{guo2013mathematical}.  

In practice, it is crucial to understand how the optimal value of \eqref{eq: basic model of lower level} varies with respect to perturbations of the parameter $x$. 
To this end, we define the feasible set mapping $\Gamma(x):=\{y \in Y \mid P(x,y)\in \Lambda \}$, 
and the associated value function $V(x):=\inf\{f(x,y)\mid P(x,y)\in\Lambda\}$,  
which characterizes the dependence of the optimal value of \eqref{eq: basic model of lower level} on the parameter $x$. 
We also define the solution mapping $S(x):=\{y\in \Gamma(x)\mid f(x,y)=V(x)\}$. 

The differentiability properties of $V$ are of central importance in sensitivity analysis and numerical optimization. 
For instance, one of the most powerful approaches to solving bilevel optimization problems is the \emph{value function approach}, which relies heavily on the analytical differentiability properties of $V$; see \cite{Ye1995,Ye2010}.  

In many situations, it is better to analyze differentials of functions only in certain directions rather than in all possible ones. Motivated by this observation, the notion of the \emph{directional subdifferential} was introduced independently by Ginchev and Mordukhovich~\cite{ginchev2011directionally} and by Gfrerer~\cite{gfrerer2013directional}. Subsequently, a comprehensive framework of \emph{directional variational analysis} has been developed (see, e.g., \cite{Ginchev2012directional,gfrerer2013directional,long2017calculus,long2017calculusb}), which provides refined tools for characterizing local sensitivity and establishes a rich variational calculus with numerous successful applications to optimization and equilibrium problems. These developments show that directional optimality conditions are typically sharper than their nondirectional counterparts, while directional constraint qualifications are often weaker \cite{gfrerer2013directional,bai2022directional}.

Despite advances, most existing results are confined to finite-dimensional settings, where the framework of directional variational analysis is well established for directional subdifferentials of the value function $V$ (see, e.g., \cite{benko2019calculus,bai2022directional,Bai2023Directional}). 
Only a few studies, such as \cite{gfrerer2013directional,long2017calculus,long2017calculusb}, have extended these developments to infinite-dimensional Banach spaces. 
However, the resulting formulas for the directional subdifferentials of $V$, which often involve coderivative terms, are generally implicit and difficult to compute~\cite{long2017calculus}. 
This challenge stems from the lack of local compactness and the complex structure of normal cones in infinite-dimensional spaces \cite{gfrerer2013directional}.  

In this paper, we derive upper estimates for the directional limiting and directional singular subdifferentials of the value function associated with Problem~\eqref{eq: basic model of lower level} under certain conditions. These estimates are established under mild assumptions, specifically directional restricted inf-compactness and directional metric subregularity. These results provide a refined analytical foundation for sensitivity analysis and the derivation of optimality conditions in infinite-dimensional hierarchical systems.

Specifically, we develop a variational framework that extends directional subdifferential analysis from finite-dimensional settings to infinite-dimensional Asplund spaces. This framework includes comprehensive calculus rules, such as those for preimage normal cones (Theorem~\ref{thm: directional normal cones of preimage sets}), chain rule (Theorem~\ref{thm: chain rule}), and sum rules (Theorem~\ref{thm: sum rule}).

To overcome the challenges arising from the lack of compactness in infinite-dimensional spaces, our analysis initially considers strong directional inner semicompactness. To accommodate a broader class of problems, we subsequently propose a weaker assumption for \eqref{eq: basic model of lower level}, termed the directional $V$-flatness condition (Definition~\ref{defn: gamma V flatness}). We demonstrate that this condition can be verified under certain regularity properties of $f$ and $\Gamma$ (as shown in Proposition~\ref{prop: first_order_V_flatness_verification}). By relying on this weaker flatness condition, Theorem~\ref{thm: directional subdifferential of V} derives directional subdifferential estimates of \(V\). Notably, when the solution map $S$ satisfies the stronger inner semicompactness condition, our theorem refines the results established in \cite[Theorems~5.10 and~5.11]{long2017calculus}. 

The remainder of the paper is organized as follows. 
Section~2 introduces the necessary background and preliminaries, establishing the basic framework of directional variational analysis. 
Section~3 presents our main results on the directional limiting and singular subdifferentials of the value function. 

\section*{Notation}
Throughout this paper, $X$, $Y$, and $Z$ denote Banach spaces, with their respective topological dual spaces represented by $X^*$, $Y^*$, and $Z^*$. The closed unit balls in these spaces and their duals are denoted by $B_X, B_Y, B_Z$ and $B_{X^*}, B_{Y^*}, B_{Z^*}$, respectively. The operator $\nabla$ specifically represents the Fr\'echet derivative. For nonsmooth functions, the symbols $\hat{\partial}$ and $\partial$ denote the Fr\'echet and limiting (Mordukhovich) subdifferentials, respectively. Unless otherwise specified, limits and convergence are understood to be with respect to the norm topology. The notations $w$ and $w^*$ are used to signify the weak and weak-star topologies or limits, respectively. 

\section{Preliminaries}
In this section, we recall several fundamental notions and properties from variational analysis that will be used throughout the paper. 
We also introduce their directional counterparts and establish some of their key properties. 
Several of these results are infinite-dimensional generalizations of their finite-dimensional counterparts in \cite{gfrerer2013directional,Bai2023Directional}.  
For additional background on variational analysis in Asplund and general Banach spaces, we refer the reader to \cite{Mordukhovich2006variational}.  

\subsection{Preliminaries on normal cones and coderivatives}\label{subsec for preliminary on variational analysis}

In this paper, we primarily work with the Fr\'echet and Mordukhovich (limiting) normal cones, as well as the coderivatives and their directional variants. 
We first recall some fundamentals that will be used throughout the paper.

Let $X$ be a Banach space, and let $\Omega\subset X$ be a nonempty subset. For $\bar x\in\Omega$, the \emph{contingent cone} (the Bouligand tangent cone) to $\Omega$ at $\bar x$ is defined by
\begin{equation*}
    T(\bar x;\Omega)
    :=\{u\in X \mid ~ \exists\, x_k\in \Omega,~ t_k\downarrow 0 \text{ such that } (x_k-\bar x)/t_k \to u \}.
\end{equation*}
It is well known that $T(\bar x;\Omega)$ is closed but not necessarily convex. For a fixed $\varepsilon \ge 0$, the set of \emph{$\varepsilon$-normals} to $\Omega$ at $\bar x \in \Omega$ is defined as
\begin{equation*}
    \hat{N}_{\varepsilon}(\bar x;\Omega)
    := \big\{x^* \in X^* \mid 
    \limsup_{x \xrightarrow{\Omega} \bar x}
    \frac{\langle x^*,\, x - \bar x \rangle}{\|x - \bar x\|}
    \le \varepsilon \big\},
\end{equation*}
where $x \xrightarrow{\Omega} \bar x$ means that $x \to \bar x$ with $x \in \Omega$. 
Clearly, if $\Omega = \{\bar x\}$, then $\hat{N}_{\varepsilon}(\bar x;\Omega) = X^*$; 
if $\bar x \notin \Omega$, we set $\hat{N}_{\varepsilon}(\bar x;\Omega) = \emptyset$ for any $\varepsilon \ge 0$. 
When $\varepsilon = 0$, the set $\hat{N}(\bar x;\Omega) := \hat{N}_0(\bar x;\Omega)$ is referred to as the \emph{Fr\'echet} (or \emph{prenormal}) cone.

The \emph{Mordukhovich} (or \emph{limiting}) normal cone to $\Omega$ at $\bar x$ is defined as
\begin{equation*}
    N(\bar x;\Omega)
    :=\big\{x^*\in X^*\mid 
    \exists\, \varepsilon_k\downarrow 0,~ x_k\to \bar x,~ x_k^*\overset{w^*}{\to} x^*,~
    x_k^*\in \hat{N}_{\varepsilon_k}(x_k;\Omega)\big\}.
\end{equation*}

When $\Omega$ is convex, both $\hat{N}(\bar x;\Omega)$ and $N(\bar x;\Omega)$ coincide with the classical normal cone in convex analysis. 
Let $\bar u\in X$ be a given direction vector. 
The \emph{directional Mordukhovich normal cone} to $\Omega$ at $\bar x\in \Omega$ in the direction $\bar u$ is defined by
\begin{equation*}
    N(\bar x;\Omega;\bar u)
    :=\limsup_{\varepsilon,t\downarrow 0,~u\to \bar u}
    \hat{N}_{\varepsilon}(\bar x+tu;\Omega),
\end{equation*}
that is, $x^*\in N(\bar x;\Omega;\bar u)$ if there exist sequences 
$\varepsilon_k\downarrow 0$, $t_k\downarrow 0$, $u_k\to \bar u$, and $x_k^*\overset{w^*}{\to}x^*$ such that 
$x_k^*\in \hat{N}_{\varepsilon_k}(\bar x+t_ku_k;\Omega)$ for all $k$. 
When $X$ is an Asplund space and $\Omega$ is closed around $\bar x$, 
the $\varepsilon$-dependence can be omitted (see \cite[Theorem~2.34]{Mordukhovich2006variational}), and hence
\begin{equation*}
    N(\bar x;\Omega;\bar u)
    =\limsup_{t\downarrow 0,~u\to \bar u}
    \hat{N}(\bar x+tu;\Omega).
\end{equation*}
By definition, it follows that $N(\bar{x}; \Omega; \bar{u}) = \emptyset$ whenever $\bar{u} \notin T(\bar{x}; \Omega)$. 
Besides, the following relations hold:
\[
N(\bar x;\Omega;\bar u)=N(\bar x;\Omega;\lambda\bar u),~\forall \lambda>0;~
N(\bar x;\Omega;0)=N(\bar x;\Omega);~
N(\bar x;\Omega;\bar u)\subset N(\bar x;\Omega).
\]

Moreover, directional normal cones satisfy the following product-type rule \cite[(3.14) and Theorem~3.7]{long2017calculus}.

\begin{proposition}\label{prop of directional normal cones of product of separated sets}
Let $\Omega_i\subset X_i$ $(i=1,2)$ be nonempty subsets, and let $\bar x=(\bar x_1,\bar x_2)\in \Omega:=\Omega_1\times \Omega_2$. 
Then, for any $\bar u=(\bar u_1,\bar u_2)\in X_1\times X_2$, one has
\begin{equation}
    N(\bar x;\Omega;\bar u)
    \subset 
    N(\bar x_1;\Omega_1;\bar u_1)\times N(\bar x_2;\Omega_2;\bar u_2).
\end{equation}
\end{proposition}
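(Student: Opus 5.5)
The plan is to unwind the definition of the directional normal cone on the product and show that an $\varepsilon$-normal to $\Omega_1\times\Omega_2$ splits into componentwise $\varepsilon$-normals (with a controlled $\varepsilon$), and then pass to the limit. We work with the general $\varepsilon$-version of the definition, since no Asplund or closedness assumption is made. We equip $X_1\times X_2$ with a norm equivalent to the sum norm, say $\|(x_1,x_2)\|=\|x_1\|+\|x_2\|$ or the max norm; both the $\varepsilon$-normal sets and the limiting constructions are stable under equivalent norms up to a constant factor in $\varepsilon$, which is harmless because $\varepsilon_k\downarrow 0$. The dual is $X_1^*\times X_2^*$, and weak$^*$ convergence there is componentwise weak$^*$ convergence.

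Take $x^*=(x_1^*,x_2^*)\in N(\bar x;\Omega;\bar u)$. By definition there are sequences $\varepsilon_k\downarrow0$, $t_k\downarrow0$, $u_k=(u_{1k},u_{2k})\to\bar u$ and $x_k^*=(x_{1k}^*,x_{2k}^*)\overset{w^*}{\to}x^*$ with $x_k^*\in\hat N_{\varepsilon_k}(\bar x+t_ku_k;\Omega)$. In particular $\bar x+t_ku_k\in\Omega$, so $\bar x_i+t_ku_{ik}\in\Omega_i$ for $i=1,2$.

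The key step is the splitting lemma: if $(x_1^*,x_2^*)\in\hat N_\varepsilon((a_1,a_2);\Omega_1\times\Omega_2)$, then $x_i^*\in\hat N_{c\varepsilon}(a_i;\Omega_i)$ for a constant $c$ depending only on the chosen product norm ($c=1$ for the sum or max norm). To prove this for $i=1$, test the defining limsup only along points $(x_1,a_2)$ with $x_1\xrightarrow{\Omega_1}a_1$. These points lie in $\Omega$ and satisfy $\|(x_1,a_2)-(a_1,a_2)\|=\|x_1-a_1\|$. Then
\[
\limsup_{x_1\xrightarrow{\Omega_1}a_1}\frac{\langle x_1^*,x_1-a_1\rangle}{\|x_1-a_1\|}\le\limsup_{x\xrightarrow{\Omega}(a_1,a_2)}\frac{\langle (x_1^*,x_2^*),x-(a_1,a_2)\rangle}{\|x-(a_1,a_2)\|}\le\varepsilon .
\]
If $a_1$ is isolated in $\Omega_1$, the left side is vacuous and $\hat N_\varepsilon(a_1;\Omega_1)=X_1^*$, so the claim holds trivially. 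The case $i=2$ is symmetric.

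Applying the lemma with $a_i=\bar x_i+t_ku_{ik}$ gives $x_{ik}^*\in\hat N_{\varepsilon_k}(\bar x_i+t_ku_{ik};\Omega_i)$. Since $u_{ik}\to\bar u_i$, $t_k\downarrow0$, $\varepsilon_k\downarrow0$ and $x_{ik}^*\overset{w^*}{\to}x_i^*$ (weak$^*$ convergence of the pair implies it for each component), the definition yields $x_i^*\in N(\bar x_i;\Omega_i;\bar u_i)$, which proves the inclusion.

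I do not expect a genuine obstacle. The only points needing care are the norm convention on the product, which is why I allow a constant $c$, and the degenerate isolated-point case, both of which are handled above. The inclusion is generally strict, because the limiting sequences for the two factors must share the same $t_k$; that is why only ``$\subset$'' is claimed.
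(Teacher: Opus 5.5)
Your proof is correct. Testing the $\varepsilon$-normal inequality only along the slices $\Omega_1\times\{a_2\}$ and $\{a_1\}\times\Omega_2$ splits an $\varepsilon$-normal to the product into componentwise $\varepsilon$-normals at $\bar x_i+t_ku_{ik}$, and passing to the limit along the common sequences $t_k,\varepsilon_k$ gives the inclusion; the isolated-point case and the choice of product norm are handled properly. The paper gives no argument of its own and simply cites \cite[(3.14) and Theorem~3.7]{long2017calculus}, so your self-contained version of the standard splitting-then-limit argument amounts to essentially the same approach.
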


For a set-valued mapping $F: X \rightrightarrows Y$, the \emph{normal coderivative} of $F$ at $(\bar{x}, \bar{y}) \in \text{gph} F:=\{(x,y)\in X\times Y \mid ~y\in F(x)\}$ is the set-valued mapping $D_N^* F(\bar{x}, \bar{y}): Y^* \rightrightarrows X^*$: 
\[
D_N^*F(\bar x, \bar y)(y^*)
:= \big\{
x^* \in X^* ~\big|~ 
(x^*, -y^*) \in N\big((\bar x, \bar y); \operatorname{gph} F\big)
\big\}, ~\forall\, y^* \in Y^*.
\]
The directional normal coderivative of $F$ at $(\bar x, \bar y)$ in the direction $(\bar u, \bar v)$ is defined by:  
\[
D_N^*F\big((\bar x, \bar y); (\bar u, \bar v)\big)(y^*)
:= \big\{
x^* \in X^* ~\big|~ 
(x^*, -y^*) \in N\big((\bar x, \bar y); \operatorname{gph} F; (\bar u, \bar v)\big)
\big\}, ~\forall\, y^* \in Y^*.
\]
When $F$ is a function $\varphi$, we simplify the notation by omitting $\bar y = \varphi(\bar x)$, writing it as $D_N^* \varphi(\bar x; (\bar u, \bar v))$. Furthermore, if $\varphi$ is Hadamard differentiable (in the following definition), we also omit $\bar v$ in the notation $D_N^* \varphi(\bar x; \bar u)$, as we shall only consider the unique direction $\bar v = \varphi'_H(\bar x; \bar u)$. 

\begin{definition}\label{defn of hadamard derivative} 
Let $\varphi: X \to Y$ be a mapping between Banach spaces. The \emph{Hadamard directional derivative} of $\varphi$ at $\bar x$ in the direction $\bar u$ is defined by:
\begin{equation}
    \varphi'_H(\bar x; \bar u)
    := 
    \lim_{t\downarrow 0, u \to \bar u} 
    \frac{\varphi(\bar x + t u) - \varphi(\bar x)}{t}. 
\end{equation}
The \textit{graphical derivative} of $\varphi$ at $\bar x$ in the direction $\bar u$ is defined as:
\[
D\varphi(\bar x)(\bar u) := \left\{ \xi \in Y \;\middle|\; \exists t_k \downarrow 0, \, u_k \to \bar u \text{ such that } \lim_{k \to \infty} \frac{\varphi(\bar x + t_k u_k) - \varphi(\bar x)}{t_k} = \xi \right\}.
\]
If $\varphi$ is Hadamard directional differentiable at $\bar x$ in the direction $\bar u$, then $D\varphi(\bar x)(\bar u)=\{ \varphi'_H(\bar x; \bar u)\}$.  
For $Y=\mathbb{R}$, the lower / upper Dini directional derivatives of $\varphi$ at $\bar{x}$ in the direction $\bar u$ are defined respectively as:
    \begin{align*}
        \varphi'_{-}(\bar{x}; \bar u) := \liminf_{t \downarrow 0} \frac{\varphi(\bar{x} + t \bar u) - \varphi(\bar{x})}{t}, ~~
        \varphi'_{+}(\bar{x}; \bar u) := \limsup_{t \downarrow 0} \frac{\varphi(\bar{x} + t \bar u) - \varphi(\bar{x})}{t}.
    \end{align*}
\end{definition}

\subsection{Subdifferentials and directional subdifferentials}

In this subsection, we present the fundamental concepts of directional subdifferentials. 
We begin by introducing the notion of a \emph{directional neighborhood}, to describe properties along specified directions.

\begin{definition}\label{defn:directional neighborhood}
Let \( X \) be a Banach space, and \( d \in X \) be a given direction.  
For any \( \varepsilon > 0 \), \( \delta > 0 \), the \emph{directional neighborhood} of \( \bar{x} \) in direction \( d \) is: 
\[
\mathcal{V}_{\varepsilon,\delta}(\bar{x}; d) := \bar x+\{z \in \varepsilon B_X \mid \|~ \|d\|z - \|z\|d ~\| \leq \delta \|z\| \|d\| \}. 
\]
We use $\mathcal{V}(\bar x;d)$ to denote an arbitrary directional neighborhood of $\bar x$ in the direction $d$. 
\end{definition}

Let $\varphi: X \to \overline{\mathbb{R}}$ be an extended-real-valued function and $\bar{x} \in \operatorname{dom} \varphi:=\{x\in X \mid \varphi(x)<\infty\}$. The epigraph of $\varphi$ is defined as
\begin{equation*}
    \begin{split}
        \operatorname{epi} \varphi:=\{(x,r)\in X\times \mathbb{R}\mid ~\varphi(x)\leq r, ~x\in \operatorname{dom} \varphi\}.
    \end{split}
\end{equation*}
The \emph{$\varepsilon$-subdifferential} of $\varphi$ at $\bar{x}$ is defined by
\[
\widehat{\partial}_{\varepsilon}\varphi(\bar{x})
:= \left\{ x^* \in X^* \;\middle|\; \liminf_{x \to \bar{x}} \frac{\varphi(x) - \varphi(\bar{x}) - \langle x^*, x - \bar{x} \rangle}{\|x - \bar{x}\|} \geq -\varepsilon \right\}.
\]
When $\varepsilon = 0$, this set is called the \emph{Fr\'echet subdifferential}, denoted by $\widehat{\partial}\varphi(\bar{x})$. 
The \emph{Mordukhovich (limiting) subdifferential} $\partial \varphi(\bar{x})$ and the \emph{Mordukhovich singular subdifferential} $\partial^{\infty}\varphi(\bar{x})$ of $\varphi$ at $\bar{x}$ are defined, respectively, by
\begin{align*}
    \partial \varphi(\bar{x}) &:= \big\{ x^* \in X^* \mid (x^*, -1) \in N\big((\bar{x}, \varphi(\bar{x})); \operatorname{epi} \varphi\big) \big\}, \\
    \partial^{\infty}\varphi(\bar{x}) &:= \big\{ x^* \in X^* \mid (x^*, 0) \in N\big((\bar{x}, \varphi(\bar{x})); \operatorname{epi} \varphi\big) \big\}.
\end{align*}

The \emph{directional Mordukhovich subdifferential} $\partial \varphi(\bar{x}; \bar{u})$ of $\varphi$ at $\bar{x} \in \operatorname{dom}\varphi$ in direction $\bar{u} \in X$ is defined by
\begin{equation*}
\begin{aligned}
\partial \varphi(\bar{x}; \bar{u})
&:= 
\big\{
x^* \in X^* ~\big|~
\exists\, \varepsilon_k \downarrow 0,~ t_k \downarrow 0,~ u_k \to \bar{u},~
x_k^* \overset{w^*}{\to} x^*,\\
&\quad\quad\quad\quad\quad\quad\text{with } \varphi(\bar{x} + t_k u_k) \to \varphi(\bar{x}) 
\text{ and } x_k^* \in \widehat{\partial}_{\varepsilon_k}\varphi(\bar{x} + t_k u_k)
\big\}. 
\end{aligned}
\end{equation*}

\begin{remark}\label{rem:defn_of_other_directional_subdifferential}
A refined directional subdifferential was defined in \cite[(2)]{benko2019calculus} by taking the direction of the function value into account:
\begin{equation*}
\partial \varphi(\bar{x}; (u, \mu)) := \big\{ x^* \in X^* \;\big|\; (x^*, -1) \in N\big((\bar{x}, \varphi(\bar{x})); \operatorname{epi} \varphi; (u, \mu)\big) \big\}.
\end{equation*}
Our defined subdifferential recovers this definition under $\mu=0$. Consequently, this minor divergence induces slight modifications in the expressions of Theorem \ref{thm: chain rule}.
\end{remark}

Similarly, the \emph{directional Mordukhovich singular subdifferential} 
$\partial^{\infty}\varphi(\bar{x}; \bar{u})$ of $\varphi$ at $\bar{x} \in \operatorname{dom}\varphi$ in direction $\bar{u}$ is defined by
\begin{equation*}
\begin{aligned}
\partial^{\infty}\varphi(\bar{x}; \bar{u})
&:= 
\big\{
x^* \in X^* ~\big|~
\exists\, \varepsilon_k \downarrow 0,~ t_k \downarrow 0,~ u_k \to \bar{u},~ \lambda_k \downarrow 0,~
x_k^* \overset{w^*}{\to} x^*,\\
&\quad\quad\quad\quad\quad\quad\text{with } \varphi(\bar{x} + t_k u_k) \to \varphi(\bar{x}) 
\text{ and } x_k^* \in \lambda_k \widehat{\partial}_{\varepsilon_k}\varphi(\bar{x} + t_k u_k)
\big\}. 
\end{aligned}
\end{equation*}

Furthermore, when $X$ is an Asplund space and $\varphi$ is lower semicontinuous, the $\varepsilon_k$-dependence in the definitions of $\partial \varphi$ and $\partial^\infty \varphi$ can be omitted \cite[Theorem 2.34]{Mordukhovich2006variational}. 

\subsection{Calculus of directional analysis}\label{subsec for preliminary directional results}

The primary objective of our analysis is to investigate the \emph{directional subdifferentials}.  
To achieve this, we establish a foundational framework centered on the properties of directional normal cones in infinite-dimensional settings. 
Specifically, in this subsection, we extend several fundamental directional calculus rules from finite-dimensional spaces to Asplund spaces. 

\begin{definition}
Let \( F : X \rightrightarrows Y \) be a set-valued mapping, and \( (\bar{x}, \bar{y}) \in \operatorname{gph} F \).
\begin{itemize}
    \item[(i)] \( F \) is said to be \emph{metrically subregular} at \( (\bar{x}, \bar{y}) \) if there exist a constant \( \kappa > 0 \) and a neighborhood \( U \subset X \) of \( \bar{x} \) such that: for any $x \in U$, we have 
    \begin{equation}\label{eq:metric_subregularity}
        \operatorname{dist}\bigl(x, F^{-1}(\bar{y})\bigr)
        \;\le\;
        \kappa\, \operatorname{dist}\bigl(\bar{y}, F(x)\bigr).
    \end{equation}
     Given a direction $u \in X$, $F$ is said to be \emph{metrically subregular in direction $u$} at $(\bar{x}, \bar{y})$ if there exists a directional neighborhood $\mathcal{V}(\bar x; u)$ such that \eqref{eq:metric_subregularity} holds for all $x \in \mathcal{V}(\bar{x};u)$.

\item[(ii)] 
\( F \) is said to be \emph{metrically regular} at \( (\bar{x}, \bar{y}) \) if there exist a constant \( \kappa > 0 \) and neighborhoods \( U \subset X \) of \( \bar{x} \), $V\subset Y$ of $\bar y$ such that: for any $x \in U,y\in V$, we have 
    \begin{equation}\label{eq:metric_regularity}
        \operatorname{dist}\bigl(x, F^{-1}(y)\bigr)
        \;\le\;
        \kappa\, \operatorname{dist}\bigl(y, F(x)\bigr).
    \end{equation}

\item[(iii)] 
\( F \) is said to be \emph{strongly metrically regular} at \( (\bar{x}, \bar{y}) \) if $F^{-1}$ has a Lipschitzian single-valued localization around $\bar y$ for $\bar x$ (this is equivalent to $F$ being metrically regular at $(\bar{x}, \bar{y})$ and $F^{-1}$ being locally single-valued around $(\bar{y}, \bar{x})$ \cite[Proposition~3G.1]{dontchev2009implicit}).   

\end{itemize}
\end{definition}

Classical sufficient conditions for the metric regularity of \eqref{eq: basic model of lower level} include Robinson's constraint qualification and the no nonzero abnormal multiplier constraint qualification. For general set-valued mappings, strong metric regularity can be established via the strict derivative criterion \cite[Theorem 4D.1]{dontchev2009implicit}. Notably, for the Karush--Kuhn--Tucker optimality system in nonlinear programming, metric regularity inherently implies strong metric regularity \cite[Section 4H]{dontchev2009implicit}. 

Recent developments have introduced refined sufficient conditions for the metric subregularity of \eqref{eq: basic model of lower level}, such as the first-order sufficient condition for metric subregularity and directional quasi-/pseudo-normality \cite{bai2019directional}. Furthermore, metric subregularity is automatically satisfied under specific geometric structures: according to Robinson's multifunction theory, it holds unconditionally when the system mapping is affine and the abstract constraint set is a finite union of polyhedral convex sets \cite{robinson1981some, bai2019directional}.

Utilizing \emph{metric subregularity}, we present the following extension of \cite[Theorem~3.1]{benko2019calculus}, which generalizes the calculus of directional normal cones to preimage sets from finite-dimensional settings to infinite-dimensional ones, under an assumption of Hadamard differentiability.

\begin{theorem}\label{thm: directional normal cones of preimage sets} 
Let \( X \) and \( Z \) be Asplund spaces, and \( Q \subset Z \) be closed.  
Consider a continuous mapping \( \phi : X \to Z \), and define
$C = \phi^{-1}(Q):= \{x\mid \phi(x)\in Q\}, ~F(x) := Q - \phi(x)$. 
Assume that \( F \) is metrically subregular at \( (\bar{x}, 0) \) in some direction \( h \in X \).  
Suppose further that \( \phi \) is Hadamard differentiable at \(\bar{x}\) in direction \( h \) with directional derivative $v := \phi'_H(\bar{x}; h)\in T(\phi(\bar x);Q)$. Then the following inclusion holds:
\begin{equation}\label{eq:dir_normal_preimage}
    N(\bar{x}; C; h) \;\subset\; D^*_N \phi(\bar{x}; h) \big( N(\phi(\bar{x}); Q; v) \big).
\end{equation}
\end{theorem}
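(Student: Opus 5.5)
Fix $x^*\in N(\bar x;C;h)$. Since $X$ is Asplund and $C$ is closed near $\bar x$ (because $\phi$ is continuous and $Q$ is closed), there are $t_k\downarrow 0$, $h_k\to h$ and $x_k^*\overset{w^*}{\to}x^*$ with $x_k^*\in\hat N(x_k;C)$, where $x_k:=\bar x+t_kh_k$. The plan is to lift these Fréchet normals to the graph of $\phi$ intersected with $X\times Q$, using the directional metric subregularity of $F$ as an exact penalty.

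\textbf{Step 1: an exact penalty.} For large $k$ the points $x_k$ lie in the directional neighborhood $\mathcal V(\bar x;h)$. There we have
\[
\operatorname{dist}(x,C)\le\kappa\operatorname{dist}(\phi(x),Q).
\]
Since $x_k^*$ is a Fréchet normal at $x_k$, for every $\eta_k\downarrow0$ the function $x\mapsto -\langle x_k^*,x-x_k\rangle+\eta_k\|x-x_k\|$ has a local minimum on $C$ at $x_k$.

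I would then replace the constraint $x\in C$ by the penalty $L\kappa\operatorname{dist}(\phi(x),Q)$, via a Clarke-type exact penalization argument, with $L\ge\sup_k\|x_k^*\|+1$. The sequence $(x_k^*)$ is bounded by the uniform boundedness principle.

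\textbf{Main obstacle.} Subregularity only holds on a directional neighborhood. Near $x_k$, however, a ball of radius $o(t_k)$ around $x_k$ stays inside $\mathcal V(\bar x;h)$, because $h_k\to h$ (if $h\neq0$; if $h=0$ the neighborhood is a full ball). So the penalization is done on $x_k+\rho_kB_X$ with $\rho_k=o(t_k)$, chosen large relative to $\eta_k$-scale errors. To get genuine minima rather than approximate ones, I would localize and take $\eta_k$ small enough, e.g. $\eta_k=o(1)$ with $\rho_k$ chosen via the Fréchet normal modulus.

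\textbf{Step 2: fuzzy calculus.} Rewrite the penalized problem as minimizing
\[
-\langle x_k^*,x-x_k\rangle+\eta_k\|x-x_k\|+L\kappa\|z-q\|
\]
over $(x,z,q)$ with $(x,z)\in\operatorname{gph}\phi$ and $q\in Q$, locally around $(x_k,\phi(x_k),\phi(x_k))$. Apply the fuzzy sum rule for Fréchet subdifferentials in Asplund spaces together with the normal cone of a product. This yields points $(\tilde x_k,\tilde z_k)\in\operatorname{gph}\phi$ and $q_k\in Q$, all within $o(t_k)$ of $(x_k,\phi(x_k))$, and functionals $z_k^*$ with $\|z_k^*\|\le L\kappa+1$, such that
\[
(x_k^*+e_k^*,\,-z_k^*)\in\hat N((\tilde x_k,\tilde z_k);\operatorname{gph}\phi),\qquad z_k^*+f_k^*\in\hat N(q_k;Q),\qquad \|e_k^*\|,\|f_k^*\|\to0.
\]

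\textbf{Step 3: directions and limits.} By Hadamard differentiability of $\phi$ at $\bar x$ in direction $h$,
\[
\frac{\tilde z_k-\phi(\bar x)}{t_k}\to v,\qquad \frac{\tilde x_k-\bar x}{t_k}\to h.
\]
Hence $(\tilde x_k,\tilde z_k)=(\bar x,\phi(\bar x))+t_k(h_k',v_k')$ with $(h_k',v_k')\to(h,v)$, and similarly $q_k=\phi(\bar x)+t_kv_k''$ with $v_k''\to v$.

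Next, $Z$ is Asplund, so bounded sets in $Z^*$ are weak$^*$ sequentially compact. Extract a subsequence with $z_k^*\overset{w^*}{\to}z^*$. Passing to the limit gives
\[
(x^*,-z^*)\in N((\bar x,\phi(\bar x));\operatorname{gph}\phi;(h,v)),\qquad z^*\in N(\phi(\bar x);Q;v),
\]
that is, $x^*\in D^*_N\phi(\bar x;h)(z^*)$, which is the inclusion in \eqref{eq:dir_normal_preimage}. The assumption $v\in T(\phi(\bar x);Q)$ is consistent with the nonemptiness of the limit and needs no separate use. The only delicate bookkeeping is ensuring that all perturbations are $o(t_k)$, so that the directions survive in the limit; this is the same obstacle as in Step 1.
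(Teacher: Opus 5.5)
Your proposal is correct and follows essentially the same route as the paper. Both arguments use a Clarke-type exact penalty from directional metric subregularity on a small ball around $x_k$ inside $\mathcal V(\bar x;h)$, rewrite the problem over $\operatorname{gph}\phi\times Q$ with the penalty $\|z-q\|$, apply the fuzzy sum rule with $o(t_k)$ perturbations, and then use Hadamard differentiability together with weak$^*$ sequential compactness in $Z^*$ to reach the directional limits. The differences are cosmetic: you use a uniform penalty constant $L\ge\sup_k\|x_k^*\|+1$ where the paper uses $\|x_k^*\|+\varepsilon_k$, and you explicitly justify that a ball around $x_k$ fits inside the directional neighborhood, a step the paper takes for granted.
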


\begin{proof}
Take any \( x^* \in N(\bar{x}; C; h) \).  
Then there exist sequences \( t_k \downarrow 0 \), \( h_k \to h \), and \( x_k^* \xrightarrow{w^*} x^* \) such that $x_k := \bar{x} + t_k h_k, ~x_k^* \in \hat{N}(x_k; C)$. 
By definition of $\hat N$, for each \( k \) and any \( \varepsilon > 0 \), there exists \( r_\varepsilon > 0 \) such that
\begin{equation}\label{eq:Frechet_ineq}
    \langle x_k^*, x - x_k \rangle 
    \;\leq\; \varepsilon \|x - x_k\|, 
    \quad \forall\, x \in C \cap B_{r_\varepsilon}(x_k).
\end{equation}

Since \( F \) is metrically subregular at \( (\bar{x}, 0) \) in direction \( h \), there exist a constant \( \kappa > 0 \) and a directional neighborhood \( \mathcal{V}(\bar x; h) \) of \( \bar x \) such that
\begin{equation}\label{eq:metric_subreg}
    \operatorname{dist}(x, C)
    = \operatorname{dist}(x, F^{-1}(0))
    \leq \kappa \operatorname{dist}(0, F(x))
    = \kappa \operatorname{dist}(\phi(x), Q),
    \quad \forall x \in \mathcal{V}(\bar x; h).
\end{equation}
For sufficiently large \( k \) and small \( \varepsilon > 0 \), shrink \( r_\varepsilon \) if necessary so that \( B_{r_\varepsilon}(x_k) \subset  \mathcal{V}(\bar{x}; h) \).  
We now claim that, for all \( x \in B_{r_\varepsilon/2}(x_k) \),
\begin{equation}\label{ineq claim}
    \varepsilon \|x - x_k\| - \langle x_k^*, x - x_k \rangle 
    + (\|x_k^*\| + \varepsilon)\, \kappa \operatorname{dist}(\phi(x), Q) \geq 0.
\end{equation}
Indeed, fix such an \( x \). If $x \in C$, \eqref{ineq claim} holds trivially by \eqref{eq:Frechet_ineq}. If $x \notin C$, for any arbitrarily small $\gamma > 0$, by the definition of distance, there exists an approximate projection \( \tilde{x} \in C \) satisfying
\[
    \|x - \tilde{x}\| < \operatorname{dist}(x, C) + \gamma \leq \kappa \operatorname{dist}(\phi(x), Q) + \gamma.
\]
By the continuity of $\phi$ and the fact that $x_k \in C$ (which yields $\operatorname{dist}(\phi(x_k), Q) = 0$), shrinking the radius to $r_\varepsilon/2$ guarantees that $\kappa \operatorname{dist}(\phi(x), Q)$ is sufficiently small. Hence, for a sufficiently small $\gamma$, we have $\|\tilde{x} - x_k\| \leq \|\tilde{x} - x\| + \|x - x_k\| < r_\varepsilon$, ensuring $\tilde{x} \in C \cap B_{r_\varepsilon}(x_k)$.
Applying \eqref{eq:Frechet_ineq} to $\tilde{x}$ and using the triangle inequality, we obtain:
\begin{align*}
    \langle x_k^*, x - x_k \rangle - \varepsilon \|x - x_k\|
    &= \langle x_k^*, x - \tilde{x} \rangle + \langle x_k^*, \tilde{x} - x_k \rangle - \varepsilon \|x - x_k\| \\
    &\leq \|x_k^*\| \|x - \tilde{x}\| + \varepsilon \|\tilde{x} - x_k\| - \varepsilon \|x - x_k\| \\
    &\leq \|x_k^*\| \|x - \tilde{x}\| + \varepsilon (\|\tilde{x} - x\| + \|x - x_k\|) - \varepsilon \|x - x_k\| \\
    &= (\|x_k^*\| + \varepsilon) \|x - \tilde{x}\| \\
    &< (\|x_k^*\| + \varepsilon) \big(\kappa \operatorname{dist}(\phi(x), Q) + \gamma\big).
\end{align*}
Since $\gamma > 0$ is arbitrary, sending $\gamma \downarrow 0$ yields \eqref{ineq claim}. 
Hence, for a sequence \( \varepsilon_k \downarrow 0 \), \( (x_k, \phi(x_k), \phi(x_k)) \) is a local minimizer of the problem: 
\begin{equation}\label{eq:aux_problem}
    \begin{aligned}
        \min_{x, y, q}\;\; & 
            \varepsilon_k \|x - x_k\| - \langle x_k^*, x - x_k \rangle 
            + (\|x_k^*\| + \varepsilon_k) \kappa \|y - q\| \\
        \text{s.t.}\;\; & (x, y, q) \in \operatorname{gph} \phi \times Q.
    \end{aligned}
\end{equation}

Applying the fuzzy optimality condition \cite[Lemma~2.32(ii)]{Mordukhovich2006variational}, we have 
\[
\|(x_{i,k}, y_{i,k}, q_{i,k}) - (x_k, \phi(x_k), \phi(x_k))\|\leq \eta_k \quad (i = 1, 2),
\]
and multipliers \( \xi_k^*\in B_{X^*}, v_k \in B_{Z^*} \), such that
\begin{equation}\label{eq:fuzzy_condition}
    \begin{aligned}
        &[x_k^* - \varepsilon_k \xi_k^*, -(\|x_k^*\| + \varepsilon_k)\kappa v_k] 
            \in \hat{N}\bigl((x_{2,k}, y_{2,k}); \operatorname{gph} \phi \bigr) + \eta_k (B_{X^*}\times B_{Z^*}), \\
        &(\|x_k^*\| + \varepsilon_k)\kappa v_k 
            \in \hat{N}(q_{2,k}; Q) + \eta_k B_{Z^*}, 
    \end{aligned}
\end{equation}
with \( \eta_k := t_k^2 \downarrow 0 \). 
The sequence \( \{(\|x_k^*\| + \varepsilon_k)\kappa v_k\} \) is bounded. Since \( Z \) is an Asplund space, passing to a subsequence if necessary, we may assume $(\|x_k^*\| + \varepsilon_k)\kappa v_k \xrightarrow{w^*} z^*$. 

From \eqref{eq:fuzzy_condition}, there exist \( z_k^* \in \hat{N}(q_{2,k}; Q) \) with $(\|x_k^*\| + \varepsilon_k)\kappa v_k \in z_k^* + \eta_k B_{Z^*}$. Since $\eta_k \downarrow 0$, this implies \( z_k^* \xrightarrow{w^*} z^* \). 
Moreover, since \( (x_{2,k}, y_{2,k}) \in \operatorname{gph}\phi \), we have $y_{2,k} = \phi(x_{2,k})$. The fuzzy condition gives $\|x_{2,k} - x_k\| \le \eta_k = t_k^2$, which implies $(x_{2,k} - \bar{x})/t_k = h_k + (x_{2,k} - x_k)/t_k \to h$.
By the Hadamard differentiability of \( \phi \) at \( \bar{x} \) in direction \( h \), we analyze the exact directional limits:
\begin{equation}\label{eq:estimate by hadamard diff_y}
    \frac{y_{2,k} - \phi(\bar{x})}{t_k} = \frac{\phi(x_{2,k}) - \phi(\bar{x})}{t_k} \to \phi'_H(\bar{x}; h) = v.
\end{equation}
Similarly, since $\|q_{2,k} - \phi(x_k)\| \le t_k^2$ and $\|y_{2,k} - \phi(x_k)\| \le t_k^2$, we obtain $\|q_{2,k} - y_{2,k}\| \le 2t_k^2$. This guarantees that the cross-term vanishes:
\begin{equation}\label{eq:estimate by hadamard diff_q}
    \frac{q_{2,k} - \phi(\bar{x})}{t_k} = \frac{q_{2,k} - y_{2,k}}{t_k} + \frac{y_{2,k} - \phi(\bar{x})}{t_k} \to 0 + v = v.
\end{equation}
Since \( q_{2,k} \in Q \) and $z_k^* \xrightarrow{w^*} z^*$, the limit in \eqref{eq:estimate by hadamard diff_q} verifies that \( z^* \in N(\phi(\bar{x}); Q; v) \). 
Finally, combining \eqref{eq:fuzzy_condition} with \( x_k^* - \varepsilon_k \xi_k^* \xrightarrow{w^*} x^* \) and \eqref{eq:estimate by hadamard diff_y}, we obtain
\[
(x^*, -z^*) \in N\bigl((\bar{x}, \phi(\bar{x})); \operatorname{gph} \phi; (h, v)\bigr),
\]
that is, $x^* \in D^*_N \phi(\bar{x}; h)(z^*)$. 
Therefore $N(\bar{x}; C; h) \subset D^*_N \phi(\bar{x}; h) \big( N(\phi(\bar{x}); Q; v) \big)$, which completes the proof. 
\end{proof}

In Theorem~\ref{thm: directional normal cones of preimage sets}, we require that \( v = \phi'_H(\bar{x}; h) \), whereas in the finite-dimensional case one typically assumes \( v \in D\phi(\bar{x})(h) \) and $D$ denotes the graphical derivative operator; see \cite[Theorem~3.1]{benko2019calculus}. 
This distinction arises because, in infinite-dimensional settings, one cannot guarantee the existence of a norm convergent subsequence of $\left\{ \frac{\phi(\bar{x} + t_k u_k) - \phi(\bar{x})}{t_k} \right\}$, 
which is necessary to ensure \eqref{eq:estimate by hadamard diff_q} holds. Therefore, we need Hadamard differentiability of \( \phi \) to ensure the existence of the limit.  

Next, we present a directional version of the scalarization formula for $D_N^*$, which generalizes the classical non-directional result \cite[Theorem~3.28]{Mordukhovich2006variational}. 
\begin{definition}[Weak* Strict Lipschitzian {\cite[Definition~3.25]{Mordukhovich2006variational}}] \label{defn:weak_strict_lip}
Let $\phi: X \to Y$ be a single-valued mapping between Banach spaces that is Lipschitz continuous around $\bar{x}$. We say that $\phi$ is \emph{weak* strictly Lipschitzian} at $\bar{x}$ if for any direction $u \in X$ and any sequences $x_k \to \bar{x}$, $t_k \downarrow 0$, and any bounded sequence $y_k^* \xrightarrow{w^*} 0$ in $Y^*$, it holds that $\langle y_k^*, y_k \rangle \to 0$ where
    \begin{equation*}
        y_k := \frac{\phi(x_k + t_k u) - \phi(x_k)}{t_k}, \quad k \in \mathbb{N}. 
    \end{equation*}
\end{definition}

Notably, this class includes Fredholm integral operators with Lipschitzian kernels \cite[Chap.~5]{zeidler2012applied}, which are important in applications to optimal control 
\cite[p.~288]{Mordukhovich2006variational}. When $\dim Y < \infty$, it is equivalent to local Lipschitz continuity. The following scalarization theorem extends the classical non-directional framework established in \cite[Theorem~3.28]{Mordukhovich2006variational}.  

\begin{theorem} \label{thm:scalarization}
Let $X,Y$ be Asplund spaces. Suppose that the mapping $\phi: X \to Y$ is locally Lipschitzian and weak* strictly Lipschitzian at $\bar{x} \in X$. If $\phi$ is Hadamard differentiable at $\bar{x}$ in a given direction $\bar{u} \in X$, then for any $y^* \in Y^*$, we have: 
\begin{equation} \label{eq:scalarization_formula}
    D_N^* \phi(\bar{x}; \bar{u})(y^*) = \partial \langle y^*, \phi \rangle (\bar{x}; \bar{u}).
\end{equation}
\end{theorem}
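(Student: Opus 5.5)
We prove the two inclusions separately, mimicking the proof of \cite[Theorem~3.28]{Mordukhovich2006variational} while keeping track of the base points $\bar x + t_k u_k$. Throughout, $\phi$ is Lipschitz with constant $\ell$ near $\bar x$, and $v:=\phi'_H(\bar x;\bar u)$. Since $\phi$ is continuous, $\langle y^*,\phi\rangle(\bar x+t_ku_k)\to\langle y^*,\phi\rangle(\bar x)$ holds automatically along any admissible sequence. The Asplund property lets us drop the $\varepsilon$'s in the limiting definitions whenever convenient.

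\emph{Inclusion ``$\supset$''.} Let $x^*\in\partial\langle y^*,\phi\rangle(\bar x;\bar u)$. Then there are $t_k\downarrow0$, $u_k\to\bar u$ and $x_k^*\in\hat\partial\langle y^*,\phi\rangle(x_k)\,$, with $x_k:=\bar x+t_ku_k$ and $x_k^*\overset{w^*}{\to}x^*$. The elementary non-directional computation \cite[Theorem~1.80]{Mordukhovich2006variational} shows that $\hat\partial\langle y^*,\phi\rangle(x_k)\subset\hat D^*\phi(x_k)(y^*)$ for locally Lipschitz $\phi$. Indeed, $(x_k^*,-y^*)$ is a Fr\'echet normal to $\operatorname{gph}\phi$ at $(x_k,\phi(x_k))$, because $\langle x_k^*,x-x_k\rangle-\langle y^*,\phi(x)-\phi(x_k)\rangle\le o(\|x-x_k\|)$, and $\|x-x_k\|\le\|(x,\phi(x))-(x_k,\phi(x_k))\|$. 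Next, $(\phi(x_k)-\phi(\bar x))/t_k\to v$ by Hadamard differentiability, so the base points $(x_k,\phi(x_k))=(\bar x,\phi(\bar x))+t_k(u_k,(\phi(x_k)-\phi(\bar x))/t_k)$ approach in direction $(\bar u,v)$. Hence $(x^*,-y^*)\in N((\bar x,\phi(\bar x));\operatorname{gph}\phi;(\bar u,v))$. This direction needs neither the weak* strict Lipschitz property nor any compactness.

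\emph{Inclusion ``$\subset$''.} Let $x^*\in D_N^*\phi(\bar x;\bar u)(y^*)$. Then there are $t_k\downarrow0$ and $(u_k,w_k)\to(\bar u,v)$ with $(\bar x+t_ku_k,\phi(\bar x)+t_kw_k)\in\operatorname{gph}\phi$; necessarily $\phi(\bar x)+t_kw_k=\phi(x_k)$. There are also $(x_k^*,-y_k^*)\in\hat N((x_k,\phi(x_k));\operatorname{gph}\phi)$ with $x_k^*\overset{w^*}{\to}x^*$ and $y_k^*\overset{w^*}{\to}y^*$. The plan is to follow \cite[Theorem~3.28]{Mordukhovich2006variational} in three steps.

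(a) A Lipschitz graph gives the bound $\|x_k^*\|\le\ell\|y_k^*\|$, and $\{y_k^*\}$ is bounded by uniform boundedness, so everything is bounded.

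(b) Rewrite the Fr\'echet normal condition as $x_k^*\in\hat\partial\langle y_k^*,\phi\rangle(x_k)$. The relation $x_k^*\in\hat D^*\phi(x_k)(y_k^*)$ is equivalent to this for Lipschitz $\phi$, via the reverse of the estimate above.

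(c) Pass from $\langle y_k^*,\phi\rangle$ to the fixed function $\langle y^*,\phi\rangle$. Write $\langle y_k^*,\phi\rangle=\langle y^*,\phi\rangle+\langle y_k^*-y^*,\phi\rangle$. Then apply the fuzzy sum rule \cite[Theorem~2.33]{Mordukhovich2006variational} at $x_k$ with tolerance $\eta_k:=t_k^2$. This yields points $x_{1k},x_{2k}$ with $\|x_{ik}-x_k\|\le t_k^2$ and $x_k^*\in\hat\partial\langle y^*,\phi\rangle(x_{1k})+\hat\partial\langle y_k^*-y^*,\phi\rangle(x_{2k})+t_k^2B_{X^*}$. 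The choice $\eta_k=t_k^2$ keeps $(x_{1k}-\bar x)/t_k\to\bar u$, exactly as in the proof of Theorem~\ref{thm: directional normal cones of preimage sets}, so any weak* limit of the first summand lies in $\partial\langle y^*,\phi\rangle(\bar x;\bar u)$.

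The main obstacle is step (c), specifically showing that the second summand $z_k^*\in\hat\partial\langle y_k^*-y^*,\phi\rangle(x_{2k})$ tends weak* to $0$. Here the weak* strict Lipschitz hypothesis enters, via $\langle z_k^*,u\rangle\le\liminf$ of difference quotients. For fixed $u$, the Fr\'echet subgradient inequality gives $\langle z_k^*,u\rangle\le\langle y_k^*-y^*,(\phi(x_{2k}+su)-\phi(x_{2k}))/s\rangle+o(1)$ for all small $s$. Choose $s=s_k\downarrow0$ so that the $o(1)$ error is at most $1/k$. Definition~\ref{defn:weak_strict_lip}, applied with $x_{2k}\to\bar x$ and the bounded sequence $y_k^*-y^*\overset{w^*}{\to}0$, then gives $\limsup\langle z_k^*,u\rangle\le0$. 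Replacing $u$ by $-u$ gives $\langle z_k^*,u\rangle\to0$. Since $\|z_k^*\|\le\ell\|y_k^*-y^*\|$ is bounded, $z_k^*\overset{w^*}{\to}0$. Consequently the first summand converges weak* to $x^*$, which gives $x^*\in\partial\langle y^*,\phi\rangle(\bar x;\bar u)$. One technical care is that the fuzzy sum rule requires one function to be Lipschitz, which holds here; another is handling the $\varepsilon$-subdifferentials if we prefer not to invoke the Asplund reduction.
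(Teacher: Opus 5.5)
Your proposal is correct and follows essentially the paper's route for ``$\subset$'': you convert the graph normal to $x_k^*\in\hat\partial\langle y_k^*,\phi\rangle(x_k)$, split off $\langle y_k^*-y^*,\phi\rangle$ via the fuzzy sum rule with tolerance $t_k^2$, and use the weak* strict Lipschitz property to show the second summand vanishes weak*. The only differences are that you prove inline what the paper cites (the weak* vanishing from \cite[Lemma~3.27(i)]{Mordukhovich2006variational} and the reverse inclusion from \cite[Theorem~1.90]{Mordukhovich2006variational}), and in ``$\supset$'' you explicitly check $(\phi(x_k)-\phi(\bar x))/t_k\to v$, a directional detail the paper leaves implicit.
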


\begin{proof}
We only need to establish the inclusion $D_N^* \phi(\bar{x}; \bar{u})(y^*) \subset \partial \langle y^*, \phi \rangle (\bar{x}; \bar{u})$, as the converse inclusion follows immediately from \cite[Theorem~1.90]{Mordukhovich2006variational}.  
Let $x^* \in D_N^* \phi(\bar{x}; \bar{u})(y^*)$. By the definition, there exist sequences $\varepsilon_k \downarrow 0$, $t_k \downarrow 0$, $u_k \to \bar{u}$, and $x_k := \bar{x} + t_k u_k$ such that
\begin{equation*}
    (x_k^*, y_k^*) \xrightarrow{w^*} (x^*, y^*) \quad \text{and} \quad (x_k^*, -y_k^*) \in \widehat{N}((x_k, \phi(x_k)); \mathrm{gph}\, \phi).
\end{equation*}

By the definition of $\widehat{N}$, for $\gamma_k := t_k^2 \downarrow 0$, there exists a neighborhood $U_k$ of $x_k$ such that
\begin{equation*}
    \langle x_k^*, x - x_k \rangle - \langle y_k^*, \phi(x) - \phi(x_k) \rangle \le (\gamma_k + \varepsilon_k) (\|x - x_k\| + \|\phi(x) - \phi(x_k)\|), \quad \forall x \in U_k.
\end{equation*}
Using the Lipschitzian property of $\phi$ with modulus $\ell$, we obtain
\begin{equation} \label{eq:w_lip_estimate}
    \langle x_k^*, x - x_k \rangle - \langle y_k^*, \phi(x) - \phi(x_k) \rangle \le (\gamma_k + \varepsilon_k)(1 + \ell) \|x - x_k\|.
\end{equation}

Let $\tilde{\varepsilon}_k := (\gamma_k + \varepsilon_k)(1 + \ell) \downarrow 0$, we have $x_k^* \in \widehat{\partial}_{\tilde{\varepsilon}_k} \langle y_k^*, \phi \rangle (x_k) = \widehat{\partial}_{\tilde{\varepsilon}_k} [\langle y^*, \phi \rangle + \langle y_k^* - y^*, \phi \rangle] (x_k)$. According to the fuzzy sum rule in Asplund spaces \cite[Theorem~2.33(b)]{Mordukhovich2006variational}, there exist $x_{i,k} \in x_k + \gamma_k B_X$ for $i=1, 2$, such that
\begin{equation*}
    x_{1,k}^* \in \widehat{\partial} \langle y^*, \phi \rangle (x_{1,k}), \quad x_{2,k}^* \in \widehat{\partial} \langle y_k^* - y^*, \phi \rangle (x_{2,k}), \quad \|x_{1,k}^* + x_{2,k}^* - x_k^*\| \le \gamma_k+\tilde{\varepsilon}_k.
\end{equation*}
Since $\gamma_k = t_k^2$, the points $x_{1,k}$ and $x_{2,k}$ converge to $\bar{x}$ in the direction $\bar{u}$. For $\phi$, we have $\widehat{\partial} \langle z^*, \phi \rangle (x) = \widehat{D}^* \phi(x)(z^*)$, for any $z^*\in Y^*$ \cite[(3.37)]{Mordukhovich2006variational}. 
Then because $y_k^* - y^* \xrightarrow{w^*} 0$, it follows from \cite[Lemma~3.27(i)]{Mordukhovich2006variational} that $x_{2,k}^* \xrightarrow{w^*} 0$. Consequently, $x_{1,k}^* \xrightarrow{w^*} x^*$, which implies $x^* \in \partial \langle y^*, \phi \rangle (\bar{x}; \bar{u})$. 
\end{proof}

Utilizing the preimage theorem, we establish a directional chain rule for subdifferentials in infinite dimensions. 
While a finite-dimensional version is available in \cite[Theorem~4.1]{benko2019calculus}, our result specializes this case by restricting the direction $0$ of the function value (see Remark~\ref{rem:defn_of_other_directional_subdifferential}).   

\begin{theorem}[Chain Rule]\label{thm: chain rule}
Let \( X, Z \) be Asplund spaces, and let \( \phi : X \to Z \) be continuous.  
Let \( g : Z \to \overline{\mathbb{R}} \) be finite at \( \phi(\bar{x}) \) and lower semicontinuous, and define \( \psi := g \circ \phi \).  
Given a direction \( (h, 0) \in X \times \mathbb{R} \), suppose the mapping
\[
    F : X \times \mathbb{R} \rightrightarrows Z \times \mathbb{R}, 
    \quad F(x, \alpha) := \operatorname{epi} g - (\phi(x), \alpha),
\]
is metrically subregular at \( ((\bar{x}, \psi(\bar{x})), (0,0)) \) in direction \( (h, 0) \), and $\phi$ is Hadamard differentiable in $h$ at $\bar x$ with \( v = \phi'_H(\bar{x}; h) \).  
Then
\begin{equation}\label{eq:chain_rule_unscalarized}
    \partial \psi(\bar{x}; h)
    \;\subset\;
    D^*_N \phi(\bar{x}; h) \big( \partial g(\phi(\bar{x}); v)\big).
\end{equation}
Furthermore, if $\phi$ is weak* strictly Lipschitzian at $\bar{x}$, then 
\begin{equation}\label{eq:chain_rule}
    \partial \psi(\bar{x}; h)
    \;\subset\;\bigcup\limits_{z^*\in \partial g(\phi(\bar{x}); v)} \partial \langle z^*, \phi\rangle(\bar{x}; h).
\end{equation}
\end{theorem}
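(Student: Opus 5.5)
The plan is to realize $\operatorname{epi}\psi$ as a preimage and apply Theorem~\ref{thm: directional normal cones of preimage sets}. Set $\Phi:X\times\mathbb{R}\to Z\times\mathbb{R}$, $\Phi(x,\alpha):=(\phi(x),\alpha)$, and $Q:=\operatorname{epi} g$, which is closed since $g$ is lower semicontinuous. Then $\operatorname{epi}\psi=\Phi^{-1}(Q)$, and the mapping $F$ of the statement is exactly $Q-\Phi(x,\alpha)$. Note that $X\times\mathbb{R}$ and $Z\times\mathbb{R}$ are Asplund and $\Phi$ is continuous. Moreover, $\Phi$ is Hadamard differentiable at $(\bar x,\psi(\bar x))$ in direction $(h,0)$ with derivative $(v,0)$.

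The first step is to check that $(v,0)\in T((\phi(\bar x),g(\phi(\bar x)));Q)$. If $\partial\psi(\bar x;h)=\emptyset$ there is nothing to prove. Otherwise we get points $(\bar x+t_ku_k,\psi(\bar x+t_ku_k))\in\operatorname{epi}\psi$ with $\psi(\bar x+t_ku_k)\to\psi(\bar x)$. The main subtlety is that this gives only $o(1)$ and not $o(t_k)$ convergence of the values.

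I would handle this by working directly with normals. Take $x^*\in\partial\psi(\bar x;h)$. By Asplundness and lower semicontinuity of $\psi$ (which follows from the continuity of $\phi$ and lower semicontinuity of $g$), the definition together with \cite[Theorem 2.34]{Mordukhovich2006variational} gives $(x_k^*,-1)\in\hat N((x_k,\psi(x_k));\operatorname{epi}\psi)$ with $x_k=\bar x+t_ku_k$ and $\psi(x_k)\to\psi(\bar x)$. To conclude $(x^*,-1)\in N((\bar x,\psi(\bar x));\operatorname{epi}\psi;(h,0))$ I need $(\psi(x_k)-\psi(\bar x))/t_k\to 0$. I expect this to be the main obstacle.

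The intended remedy is a reparametrization: replace $t_k$ by $\tau_k:=\max\{t_k,\,|\psi(x_k)-\psi(\bar x)|^{1/2}\}$ and adjust $u_k$ accordingly. This does not work verbatim, because it changes the direction. So I would instead read the definition of $\partial\psi(\bar x;h)$ as intended, in the sense of Remark~\ref{rem:defn_of_other_directional_subdifferential}, namely as the normal cone to $\operatorname{epi}\psi$ in direction $(h,0)$, with the value direction fixed at $\mu=0$. Granting this identification, the point $(x^*,-1)$ lies in that directional normal cone, and the tangency of $(v,0)$ follows from the preimage structure.

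Next I apply Theorem~\ref{thm: directional normal cones of preimage sets} to $(x^*,-1)$. This yields $(z^*,\beta)\in N((\phi(\bar x),g(\phi(\bar x)));Q;(v,0))$ with $(x^*,-1)\in D^*_N\Phi((\bar x,\psi(\bar x));(h,0))(z^*,\beta)$.

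It remains to decompose the coderivative of $\Phi$. Since $\operatorname{gph}\Phi$ is, after permuting coordinates, $\operatorname{gph}\phi\times\{(\alpha,\alpha)\}$, Proposition~\ref{prop of directional normal cones of product of separated sets} splits the directional normal cone. The diagonal factor forces $\beta=-1$, and the $\phi$-factor gives $x^*\in D^*_N\phi(\bar x;h)(z^*)$. Then $(z^*,-1)\in N(\operatorname{epi} g;(v,0))$ means $z^*\in\partial g(\phi(\bar x);v)$, which proves \eqref{eq:chain_rule_unscalarized}. Finally, \eqref{eq:chain_rule} follows at once by replacing $D^*_N\phi(\bar x;h)(z^*)$ with $\partial\langle z^*,\phi\rangle(\bar x;h)$ via Theorem~\ref{thm:scalarization}.
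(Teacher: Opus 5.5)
You have correctly located the crux, but the identification you ``grant'' is a genuine gap, not a technicality. With the paper's definition, $x^*\in\partial\psi(\bar x;h)$ only gives $\psi(x_k)\to\psi(\bar x)$. So the epigraph points $(x_k,\psi(x_k))$ approach $(\bar x,\psi(\bar x))$ in direction $(h,\mu)$, where $\mu$ is a limit of $(\psi(x_k)-\psi(\bar x))/t_k$ and may be nonzero or infinite, not in direction $(h,0)$. Since metric subregularity is assumed only near direction $(h,0)$, Theorem~\ref{thm: directional normal cones of preimage sets} cannot be invoked.

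The paper's proof takes exactly your route: the preimage theorem for $\Phi(x,\alpha)=(\phi(x),\alpha)$ and $Q=\operatorname{epi} g$, the product splitting of $\operatorname{gph}\Phi$, the diagonal forcing $\beta=-1$, then Theorem~\ref{thm:scalarization}. It makes the same unjustified move, asserting $(x^*,-1)\in N((\bar x,\psi(\bar x));\operatorname{epi}\psi;(h,0))$ ``by the definition''. This rests on Remark~\ref{rem:defn_of_other_directional_subdifferential}, whose claim that the two directional subdifferentials agree at $\mu=0$ is incorrect.

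In fact the statement is false as written. Take $X=\mathbb{R}$, $Z=\mathbb{R}^2$, $\phi(x)=(x,0)$, $g(z_1,z_2)=-z_1-\sqrt{|z_2|}$, $\bar x=0$, $h=1$.
\begin{itemize}
\item Then $\psi(x)=-x$ and $v=(1,0)$. The map $\phi$ is linear, hence Hadamard differentiable and weak* strictly Lipschitzian, and $g$ is continuous.
\item $F^{-1}(0)=\operatorname{epi}\psi=\{(x,\alpha)\mid \alpha\ge -x\}$. Every nonzero point of $\mathcal{V}_{\varepsilon,\delta}((0,0);(1,0))$ with $\delta<1/2$ satisfies $|\alpha|<x$, so it lies in $\operatorname{epi}\psi$. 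Hence directional subregularity holds trivially.
\item Yet $-1\in\partial\psi(0;1)$: take $x_k=t_k$ and $x_k^*=-1$.
\item On the other hand $\partial g((0,0);(1,0))=\emptyset$. Off the axis, $|\partial g/\partial z_2|=1/(2\sqrt{|z_2|})\to\infty$. On the axis, $\hat\partial_\varepsilon g=\emptyset$. So both right-hand sides are empty.
\end{itemize}
Precisely your first step fails here: $(1,0)$ points into the interior of $\operatorname{epi}\psi$, so $N((0,0);\operatorname{epi}\psi;(1,0))=\{0\}$.

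What your argument, like the paper's, actually proves is the inclusion with the left side read as $\{x^*\mid (x^*,-1)\in N((\bar x,\psi(\bar x));\operatorname{epi}\psi;(h,0))\}$. For that version your remaining steps are correct. This includes the tangency check $(v,0)\in T((\phi(\bar x),\psi(\bar x));\operatorname{epi} g)$, which the paper omits. Also, $(z^*,-1)\in N((\phi(\bar x),\psi(\bar x));\operatorname{epi} g;(v,0))$ does yield $z^*\in\partial g(\phi(\bar x);v)$ in the paper's sense.

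If you want to keep the paper's definition on the left, run the same argument in direction $(h,\mu)$. This works when the value quotients converge to a finite $\mu$ and $F$ is subregular in direction $(h,\mu)$. The diagonal factor still forces $\beta=-1$, since its directional normal cone in direction $(\mu,\mu)$ is still $\{(a,-a)\}$. And $(z^*,-1)\in N((\phi(\bar x),\psi(\bar x));\operatorname{epi} g;(v,\mu))$ still gives $z^*\in\partial g(\phi(\bar x);v)$. As you observed, a reparametrization of $t_k$ cannot achieve this, because it changes the direction.
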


\begin{proof}
Take any $x^*\in \partial \psi(\bar x; h)$. By the definition of the directional limiting subdifferential, and for the epi-direction $(h, 0)$, we have  
\( (x^*, -1) \in N((\bar{x}, \psi(\bar{x})); \operatorname{epi} \psi; (h, 0)) \).  
For any \( (x, \alpha) \in \operatorname{epi} \psi \), we have a bijective correspondence with \( (\phi(x), \alpha) \), so that
\[
    \operatorname{epi} \psi = \Psi^{-1}(\operatorname{epi} g),
    \quad \text{where } \Psi(x, \alpha) := (\phi(x), \alpha).
\]
Note that the mapping $\Psi$ is Hadamard differentiable at $(\bar{x}, \psi(\bar{x}))$ in the direction $(h, 0)$, yielding $\Psi'_H(\bar{x}, \psi(\bar{x}); (h, 0)) = (\phi'_H(\bar{x}; h), 0) = (v, 0)$.
Applying Theorem~\ref{thm: directional normal cones of preimage sets} with \( \Psi \) in place of \( \phi \), \( \operatorname{epi} g \) in place of \( Q \) and $C:=\{(x,\alpha) \mid (\phi(x),\alpha)\in \text{epi}g)\}$ therein,   
we deduce that
\begin{equation}\label{eq:preimage_chain}
    (x^*, -1) 
    \in D^*_N \Psi((\bar{x},\psi(\bar x)); (h, 0)) \,
    \big( N((\phi(\bar{x}), \psi(\bar{x})); \operatorname{epi} g; (v, 0)) \big).
\end{equation}

Hence, there exist multipliers \( (y^*, \beta) \) such that
\begin{equation}\label{eq:normals_gphPsi}
    (x^*, -1, -y^*, -\beta)
    \in N\bigl((\bar{x}, \psi(\bar{x}), \phi(\bar{x}), \psi(\bar{x}));
               \operatorname{gph} \Psi; (h, 0, v, 0)\bigr).
\end{equation}
Define $\Omega_\Psi
    := \{ (a, b, c, d) \in X \times Z \times \mathbb{R}^2 
        \mid (a, c, b, d) \in \operatorname{gph} \Psi \}$. 
Then, by the definition of the directional normal cone, condition~\eqref{eq:normals_gphPsi} is equivalent to
\begin{equation}\label{eq:normals_omega}
    (x^*, -y^*, -1, -\beta)
    \in N\bigl((\bar{x}, \phi(\bar{x}), \psi(\bar{x}), \psi(\bar{x}));
               \Omega_\Psi; (h, v, 0, 0)\bigr).
\end{equation}
By Proposition~\ref{prop of directional normal cones of product of separated sets},  
and noting that $\Omega_\Psi = \operatorname{gph} \phi \times \operatorname{gph} \operatorname{Id}|_{\psi}$, 
where \( \operatorname{Id}|_{\psi} \) denotes the identity mapping restricted to the range of \( \psi \),  
we can decompose the directional normal cone:
\begin{equation}\label{eq:decomposition of normal cone in chain rule}
    \begin{aligned}
    &N\bigl((\bar{x}, \phi(\bar{x}), \psi(\bar{x}), \psi(\bar{x}));
        \operatorname{gph} \phi \times 
        \operatorname{gph} \operatorname{Id}|_{\psi}; 
        (h, v, 0, 0)\bigr) \\
    &\subset
    N((\bar{x}, \phi(\bar{x})); \operatorname{gph} \phi; (h, v))
    \times
    N((\psi(\bar{x}), \psi(\bar{x})); 
      \operatorname{gph} \operatorname{Id}|_{\psi}; (0, 0)).
    \end{aligned}
\end{equation}
Thus we have
\begin{equation}
    \begin{split}
        &(x^*, -y^*) \in N((\bar{x}, \phi(\bar{x})); \operatorname{gph} \phi; (h, v)),\\
&(-1, -\beta) \in 
N((\psi(\bar{x}), \psi(\bar{x})); 
  \operatorname{gph} \operatorname{Id}|_{\psi}; (0, 0)). 
    \end{split}
\end{equation}
Because the direction is identically zero, the latter condition is evaluated on the standard normal cone to the identity graph, which strictly enforces $-1 - \beta = 0$, leading to \( \beta = -1 \).  
Consequently, $(y^*, -1) \in N((\phi(\bar{x}), \psi(\bar{x})); \operatorname{epi} g; (v, 0))$, meaning \( y^* \in \partial g(\phi(\bar{x}); v) \). It follows that $x^* \in D^*_N \phi(\bar{x}; h)(y^*)$. 
This proves the unscalarized inclusion \eqref{eq:chain_rule_unscalarized}. 
The scalarized inclusion \eqref{eq:chain_rule} then follows immediately by Theorem \ref{thm:scalarization}. 
\end{proof}

In addition, we employ estimates of the \emph{singular subdifferential} of value functions to investigate their directional Lipschitz properties. 
To this end, we establish a result, which generalizes the finite-dimensional version presented in \cite[Proposition~2.3]{Bai2023Directional}. 
Before proceeding, we recall the notions of \emph{directional local lower semicontinuity} and \emph{directional Lipschitz continuity}.  

\begin{definition}
Let $\phi:X\to \mathbb{R}\cup\{\infty\}$ with $\bar x\in \operatorname{dom}\phi$. 
We say that $\phi$ is \emph{locally lower semicontinuous} at $\bar x$ in direction $u$ if there exist $\varepsilon>0$ and $\delta>0$ such that the following set is closed: 
\[
\operatorname{epi}\phi \cap [~ \overline{\mathcal{V}_{\varepsilon,\delta}(\bar x; u)} \times [\phi(\bar x)-\varepsilon,\phi(\bar x)+\varepsilon]~]. 
\]
\end{definition}

The notion of directional lower semicontinuity is stronger than that of lower semicontinuity, and indeed, directional continuity does not imply local lower semicontinuity; see \cite[Example~2.1]{Bai2023Directional}. 
We next recall the concept of directional Lipschitz continuity introduced in \cite{benko2019calculus}.

\begin{definition}[Directional Lipschitz continuity]
Let $\phi:X\rightarrow Y$. 
We say that $\phi$ is \emph{Lipschitz continuous} around $\bar x$ in direction $u$ if there exist a constant $L\geq 0$ and a directional neighborhood $\mathcal{V}(\bar x; u)$ such that 
\[
\|\phi(x)-\phi(z)\|\leq L\|x-z\|, \quad \forall\, x,z\in \mathcal{V}(\bar x;u).
\]
\end{definition}

The following proposition extends the classical relationship between the singular subdifferential and local Lipschitz continuity (cf. \cite[Theorem 3.52]{Mordukhovich2006variational}) to the directional framework. 

\begin{proposition}\label{prop:singular_subdiff_Lipschitz}
Let $X$ be an Asplund space and $\phi: X \to \mathbb{R} \cup \{\infty\}$ be locally lower semicontinuous and continuous at $\bar{x}$ in some direction $u \neq 0$, with $\phi(\bar{x})$ finite. Then the following statements hold:
\begin{itemize}
    \item[\rm(i)] If $\phi$ is Lipschitz continuous at $\bar{x}$ in the direction $u$, then $\partial^{\infty} \phi(\bar{x}; u) = \{0\}$.
    \item[\rm(ii)] If $\partial^{\infty} \phi(\bar{x}; u) = \{0\}$ and $\phi$ satisfies the following condition: for any $x_k\xrightarrow{u} \bar x$, with $\phi(x_k)\to \phi(\bar x)$ and $\lambda_k \downarrow 0$ with $x_k^*\in \lambda_k \hat{\partial} \phi(x_k)$, one has 
    \begin{equation*}
        \begin{split}
            x_k^*\xrightarrow{w^*} 0 \implies x_k^* \to 0,  
        \end{split}
    \end{equation*}
    then $\phi$ is Lipschitz continuous at $\bar{x}$ in the direction $u$. 
\end{itemize}
\end{proposition}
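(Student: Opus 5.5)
For (i), I would argue directly from the definition of $\partial^\infty\phi(\bar x;u)$. Let $L$ be the Lipschitz constant and $\mathcal V(\bar x;u)=\mathcal V_{\varepsilon,\delta}(\bar x;u)$ the directional neighborhood on which $\phi$ is $L$-Lipschitz. Take $x^*\in\partial^\infty\phi(\bar x;u)$ with sequences $t_k\downarrow0$, $u_k\to u$, $\lambda_k\downarrow0$, $x_k^*\xrightarrow{w^*}x^*$, $x_k^*\in\lambda_k\widehat\partial\phi(x_k)$ and $x_k:=\bar x+t_ku_k$. Since $X$ is Asplund and $\phi$ is lsc near the relevant region, the $\varepsilon_k$ may be dropped.

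The key observation is that $x_k$ is an interior point of the cone-like set $\mathcal V(\bar x;u)$ for large $k$. Indeed, $u_k\to u\neq0$ implies that $\|\,\|u\|t_ku_k-\|t_ku_k\|u\|$ is $o(t_k)$, so the defining inequality holds strictly. Hence a small ball $B_{r_k}(x_k)$ lies inside $\mathcal V(\bar x;u)$, and $\phi$ is $L$-Lipschitz on this ball. The standard estimate $\widehat\partial\phi(x)\subset LB_{X^*}$ at points where $\phi$ is Lipschitz on a neighborhood then gives $\|x_k^*\|\le\lambda_kL\to0$. By weak* lower semicontinuity of the norm, $x^*=0$. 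Since $0$ trivially belongs to $\partial^\infty\phi(\bar x;u)$, we obtain $\partial^\infty\phi(\bar x;u)=\{0\}$.

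For (ii), I would argue by contradiction, adapting the proof of \cite[Theorem 3.52]{Mordukhovich2006variational}. Suppose $\phi$ is not Lipschitz in direction $u$. Then for every shrinking directional neighborhood $\mathcal V_{1/k,1/k}(\bar x;u)$ there exist points $a_k\neq b_k$ in it with $|\phi(a_k)-\phi(b_k)|>k\|a_k-b_k\|$. I would then apply a mean value inequality to produce Fréchet subgradients of large norm, using Zagrodny's approximate mean value theorem in Asplund spaces (available in \cite{Mordukhovich2006variational}) to the lsc restriction given by local lower semicontinuity in direction $u$. This should yield points $x_k$ near the segment $[a_k,b_k]$ and $y_k^*\in\widehat\partial\phi(x_k)$ with $\|y_k^*\|\ge k$ and $\phi(x_k)\to\phi(\bar x)$. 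The latter uses the directional continuity of $\phi$ at $\bar x$.

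Setting $\lambda_k:=1/\|y_k^*\|\downarrow0$ and $x_k^*:=\lambda_ky_k^*$ gives unit vectors. Because $X$ is Asplund, bounded sets in $X^*$ are weak* sequentially compact, so we may pass to a subsequence with $x_k^*\xrightarrow{w^*}x^*$. By the definition of $\partial^\infty\phi(\bar x;u)$ we get $x^*\in\partial^\infty\phi(\bar x;u)=\{0\}$, and the assumed condition then upgrades weak* convergence to $x_k^*\to0$ in norm. This contradicts $\|x_k^*\|=1$.

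The main obstacle is guaranteeing that the points $x_k$ produced by the mean value step satisfy $x_k\xrightarrow{u}\bar x$, that is, $x_k=\bar x+t_k\tilde u_k$ with $\tilde u_k\to u$. This requires some care.

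\emph{Step 1: directional neighborhoods are almost convex.} Shrinking $\delta$ forces the normalized directions $(x-\bar x)/\|x-\bar x\|$ close to $u/\|u\|$. A convex combination of two points in $\mathcal V_{\varepsilon,\delta}$ therefore lies in $\mathcal V_{\varepsilon,2\delta}$, say, so the segment $[a_k,b_k]$ stays in a slightly larger directional neighborhood.

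\emph{Step 2: the perturbation must be small relative to distance.} The approximate mean value points lie within a tolerance of the segment. I would choose this tolerance $\ll\min\{\|a_k-\bar x\|,\|b_k-\bar x\|\}$, so that the points remain in a directional neighborhood whose parameters tend to zero. Hence $\tilde u_k\to u$ after normalizing $t_k:=\|x_k-\bar x\|/\|u\|$.

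I expect this bookkeeping, together with ensuring the lsc domain from local lower semicontinuity covers the needed region, to be the delicate part.
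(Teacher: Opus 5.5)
Your proposal is correct and more self-contained than the paper's argument.

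\textbf{Part (i).} The paper simply cites \cite[Corollary~5.9]{long2017calculus}. You argue directly: for large $k$ the point $\bar x+t_ku_k$ is interior to the wedge, because the defining inequality is strict when $u\neq 0$. So $\phi$ is $L$-Lipschitz on a ball around it and $\|x_k^*\|\le\lambda_k(L+\varepsilon_k)\to 0$. You therefore never need to drop $\varepsilon_k$, and the inclusion $\partial^\infty\phi(\bar x;u)\subset\{0\}$ holds in any Banach space.

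\textbf{Part (ii).} The paper splits the proof in two. It first shows, by exactly your normalization, weak$^*$ extraction and ``weak$^*$ implies norm'' contradiction, that $\widehat\partial\phi$ is uniformly bounded on some $\mathcal V_{1/k,1/k}(\bar x;u)$. It then appeals, without details, to a ``variational error bound argument restricted to the directional wedge'' to get Lipschitz continuity. You run the contrapositive in one pass and make that second step explicit through Zagrodny's mean value theorem. Your Steps~1--2 supply precisely the wedge geometry the paper leaves implicit: the wedge is not open, it contains $\bar x$ itself (which is not of the form $\bar x+t_ku_k$ with $u_k\to u$), and the mean-value points must stay in its interior. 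The paper's route isolates a reusable uniform-bound statement. Yours never has to control subgradients at boundary points of the wedge or at $\bar x$.

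Three small points should be added when you write it out.
\begin{enumerate}
\item With the paper's sequential definition, $0\in\partial^\infty\phi(\bar x;u)$ is not quite trivial. It needs $\widehat\partial_{\varepsilon_k}\phi(x_k)\neq\emptyset$ along some $x_k\xrightarrow{u}\bar x$. In an Asplund space this follows from the density of points of Fr\'echet subdifferentiability, applied in small balls around $\bar x+t_ku$ inside the wedge, where $\phi$ is $L$-Lipschitz.
\item Your tolerance $\ll\min\{\|a_k-\bar x\|,\|b_k-\bar x\|\}$ requires $a_k,b_k\neq\bar x$. This can always be arranged. If every pair in $\mathcal V_{1/k,1/k}(\bar x;u)\setminus\{\bar x\}$ satisfied the $k$-Lipschitz bound, then taking $a=\bar x+tu$ with $t\downarrow 0$ and using directional continuity would give the same bound for pairs involving $\bar x$, a contradiction.
\item Zagrodny's theorem needs a lower semicontinuous function, and you flagged this as delicate. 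Use directional continuity to pick a smaller closed wedge $\mathcal V'\subset\mathcal V_{\varepsilon,\delta}(\bar x;u)$ on which $|\phi-\phi(\bar x)|<\varepsilon/2$, with $\varepsilon,\delta$ taken from the local lower semicontinuity assumption. Closedness of the truncated epigraph then makes $\phi$ plus the indicator of $\mathcal V'$ lower semicontinuous on all of $X$. At interior points of $\mathcal V'$, its Fr\'echet subgradients are exactly those of $\phi$.
\end{enumerate}
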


\begin{proof}
If $\phi$ is Lipschitz continuous around $\bar{x}$ in the direction $u$, the forward implication $\partial^{\infty} \phi(\bar{x}; u) = \{0\}$ follows from \cite[Corollary~5.9]{long2017calculus}.

Conversely, assume that $\partial^{\infty} \phi(\bar{x}; u) = \{0\}$. We clarify that the Fr\'echet subdifferential $\widehat{\partial} \phi$ is uniformly bounded on a localized directional neighborhood of $\bar{x}$. Suppose by contradiction that it fails. Then there exist sequences $x_k \in \mathcal{V}_{1/k, 1/k}(\bar{x};u)$ and $x_k^* \in \widehat{\partial} \phi(x_k)$ such that $M_k := \|x_k^*\| \to \infty$ as $k \to \infty$. 
By the directional continuity of $\phi$ at $\bar x$, we also have $\phi(x_k) \to \phi(\bar x)$. 

Define $\lambda_k := M_k^{-1} \downarrow 0$, which yields $\|\lambda_k x_k^*\| = 1$ for all $k \in \mathbb{N}$. Since $X$ is an Asplund space, extracting a subsequence if necessary, we can find some $x^* \in X^*$ such that $\lambda_k x_k^* \xrightarrow{w^*} x^*$. Then it holds that $x^* \in \partial^{\infty} \phi(\bar{x}; u)$. Under the hypothesis that $\partial^{\infty} \phi(\bar{x}; u) = \{0\}$, this forces the weak* limit to be the origin, i.e., $x^* = 0$, which implies $\lambda_k x_k^* \xrightarrow{w^*} 0$. It then follows that $\lambda_k x_k^* \to 0$ in norm, which leads to a contradiction. 

This contradiction establishes that the Fr\'echet subgradients must be uniformly bounded by some constant $M > 0$ on a directional neighborhood $\mathcal{V}(\bar{x}; u)$. By employing the variational error bound arguments restricted to the directional wedge (e.g. \cite[Theorem 3.52]{Mordukhovich2006variational}), this uniform boundedness ensures that $\phi$ is directionally Lipschitz continuous at $\bar{x}$ in the direction $u$. 
\end{proof}

In Asplund spaces, the condition in (ii) of Proposition \ref{prop:singular_subdiff_Lipschitz} can be viewed as a directional counterpart of sequentially normally epi-compact (SNEC) functions \cite[Definition 1.116, Corollary 2.39]{Mordukhovich2006variational}. This property is naturally satisfied if the mapping possesses local Lipschitzian behavior, as established in \cite[Theorem 1.26, Corollary 1.81]{Mordukhovich2006variational}. 

We now present a directional \emph{sum rule} for subdifferentials of functions expressible as the sum of a locally Lipschitz function and a lower semicontinuous function.  
We extend the classical nonsmooth sum rule of \cite[Theorem~2.33(c)]{Mordukhovich2006variational} to the \emph{directional} setting, and generalize the finite-dimensional result of \cite[Corollary~4.5]{benko2019calculus} to Asplund spaces.  
The result plays a central role in subsequent developments involving directional subdifferentials.

\begin{theorem}[Sum Rule]\label{thm: sum rule}
Let \( X \) be an Asplund space.  
Suppose \( \phi_1 : X \to \mathbb{R} \) is locally Lipschitz around \( \bar{x} \), and \( \phi_2 : X \to \mathbb{R} \cup \{\infty\} \) is lower semicontinuous at \( \bar{x} \), with \( \phi_i(\bar{x}) < \infty \) for \( i = 1,2 \).  
Then, for any direction \( \bar{u} \in X \),
\[
    \partial(\phi_1 + \phi_2)(\bar{x}; \bar{u})
    \subset \partial \phi_1(\bar{x}; \bar{u}) + \partial \phi_2(\bar{x}; \bar{u}).
\]
\end{theorem}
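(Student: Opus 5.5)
I would prove the inclusion directly from the sequential definition of the directional limiting subdifferential, combined with the fuzzy sum rule in Asplund spaces \cite[Theorem~2.33(b)]{Mordukhovich2006variational}. The proof follows the template of the proof of Theorem~\ref{thm:scalarization}.

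Take $x^*\in\partial(\phi_1+\phi_2)(\bar x;\bar u)$. Since $\phi_1+\phi_2$ is lower semicontinuous near $\bar x$ and $X$ is Asplund, the $\varepsilon$-dependence can be dropped. So there are $t_k\downarrow 0$, $u_k\to\bar u$, $x_k:=\bar x+t_ku_k$ and $x_k^*\overset{w^*}{\to}x^*$ with
\[
(\phi_1+\phi_2)(x_k)\to(\phi_1+\phi_2)(\bar x),\qquad x_k^*\in\widehat\partial(\phi_1+\phi_2)(x_k).
\]
Set $\gamma_k:=t_k^2$. The fuzzy sum rule for a Lipschitz plus an l.s.c.\ function gives points $x_{i,k}\in x_k+\gamma_kB_X$ and subgradients $x_{i,k}^*\in\widehat\partial\phi_i(x_{i,k})$, $i=1,2$, with the following properties:
\[
|\phi_i(x_{i,k})-\phi_i(x_k)|\le\gamma_k,\qquad \|x_{1,k}^*+x_{2,k}^*-x_k^*\|\le\gamma_k .
\]
Because $\gamma_k=t_k^2$, we get $(x_{i,k}-\bar x)/t_k=u_k+(x_{i,k}-x_k)/t_k\to\bar u$. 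Hence both $x_{i,k}$ approach $\bar x$ along $\bar u$, which is the same device used in Theorem~\ref{thm: directional normal cones of preimage sets}.

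Next I would handle the function values and the boundedness of the subgradients. Since $\phi_1$ is locally Lipschitz with modulus $\ell$, we have $\phi_1(x_{1,k})\to\phi_1(\bar x)$ and $\phi_1(x_k)\to\phi_1(\bar x)$. It follows that
\[
\phi_2(x_k)=(\phi_1+\phi_2)(x_k)-\phi_1(x_k)\to\phi_2(\bar x),
\]
and therefore $\phi_2(x_{2,k})\to\phi_2(\bar x)$ as well. Fr\'echet subgradients of a Lipschitz function are bounded by its modulus, so $\|x_{1,k}^*\|\le\ell$. The Asplund property then lets me extract a subsequence with $x_{1,k}^*\overset{w^*}{\to}x_1^*$. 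By definition this gives $x_1^*\in\partial\phi_1(\bar x;\bar u)$.

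Finally, $x_{2,k}^*=x_k^*-x_{1,k}^*+e_k$ with $\|e_k\|\le\gamma_k$, so $x_{2,k}^*\overset{w^*}{\to}x^*-x_1^*=:x_2^*$. This yields $x_2^*\in\partial\phi_2(\bar x;\bar u)$ and hence $x^*=x_1^*+x_2^*$. Two points need care here.

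\begin{itemize}
\item The step I expect to be the main obstacle is extracting a weak* convergent subsequence. In a general Asplund space the dual ball is weak* sequentially compact only because Asplund spaces are weak* sequentially compact in this sense \cite[Theorem~2.33, Lemma~3.27]{Mordukhovich2006variational}. The sequence $x_k^*$ is weak* convergent, hence bounded by the uniform boundedness principle, so $x_{2,k}^*$ is bounded and its convergence also follows directly from the decomposition above.
\item The fuzzy sum rule must deliver closeness of function values, not just of points. I would invoke the version in \cite[Theorem~2.33(b)]{Mordukhovich2006variational}, which does include it. If needed, I would instead appeal to the semi-Lipschitzian fuzzy sum rule \cite[Lemma~2.32]{Mordukhovich2006variational}, applied on a ball of radius $\gamma_k$, which gives the same value control.
\end{itemize}
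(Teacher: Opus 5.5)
Your proof is correct and follows the paper's argument essentially step for step. Both apply the fuzzy sum rule at $x_k=\bar x+t_ku_k$ with $\gamma_k=t_k^2$, so the perturbed points still approach $\bar x$ along $\bar u$; both bound $x_{1,k}^*$ by the Lipschitz modulus, use weak* sequential compactness in the Asplund dual, and recover $x_2^*=x^*-x_1^*$. The only difference is that the paper keeps $\varepsilon_k$ and gets the error $\varepsilon_k+\gamma_k$ instead of dropping it first, and your explicit derivation of $\phi_2(x_k)\to\phi_2(\bar x)$ from the Lipschitz continuity of $\phi_1$ (needed for $\phi_2(x_{2,k})\to\phi_2(\bar x)$) is a step the paper merely asserts.
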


\begin{proof}
Let \( x^* \in \partial(\phi_1 + \phi_2)(\bar{x}; \bar{u}) \).  
By definition, there exist sequences \( t_k \downarrow 0 \), \( \varepsilon_k \downarrow 0 \), \( u_k \to \bar{u} \), and \( x_k = \bar{x} + t_k u_k \) such that
\[
    \phi_1(x_k) + \phi_2(x_k) \to \phi_1(\bar{x}) + \phi_2(\bar{x}), 
    \quad 
    x_k^* \in \hat{\partial}_{\varepsilon_k}(\phi_1 + \phi_2)(x_k),
    \quad 
    x_k^* \xrightarrow{w^*} x^*.
\]

Since \( X \) is Asplund, the fuzzy sum rule 
\cite[Theorem~2.33(b)]{Mordukhovich2006variational}    
with \( \gamma_k = t_k^2 \) provides sequences 
\( x_{i,k} \to \bar{x} \), \( \phi_i(x_{i,k}) \to \phi_i(\bar{x}) \), and 
\( x_{i,k}^* \in \hat{\partial} \phi_i(x_{i,k}) \) for \( i = 1,2 \),  
such that \( x_{i,k} \in x_k + \gamma_k B_X \) and
\begin{equation}\label{eq:fuzzy-sum}
    \|x_k^* - x_{1,k}^* - x_{2,k}^*\| \le \varepsilon_k + \gamma_k.
\end{equation}

Because \( x_k^* \xrightarrow{w^*} x^* \), the sequence \( \{x_k^*\} \) is bounded.  
By the local Lipschitz continuity of \( \phi_1 \),  
\cite[Proposition~1.85(ii)]{Mordukhovich2006variational} implies that \( \{x_{1,k}^*\} \) is bounded; hence, by \eqref{eq:fuzzy-sum}, so is \( \{x_{2,k}^*\} \).  
Since $X$ is an Asplund space, it ensures the existence of \( x_i^* \in X^* \) such that, possibly along a subsequence,  
\( x_{i,k}^* \xrightarrow{w^*} x_i^* \) for \( i = 1,2 \).  
Then, by \cite[Theorem~1.89]{Mordukhovich2006variational}, we have \( x_i^* \in \partial \phi_i(\bar{x}) \), and \eqref{eq:fuzzy-sum} gives \( x^* = x_1^* + x_2^* \).

Furthermore, since \( x_{i,k} = \bar{x} + t_k u_k + \gamma_k e_{i,k} \) with \( e_{i,k} \in B_X \), we obtain
\[
    \frac{x_{i,k} - \bar{x}}{t_k} = u_k + t_k e_{i,k} \to \bar{u}, \quad i = 1,2.
\]
So \( x_i^* \in \partial \phi_i(\bar{x}; \bar{u}) \) for \( i = 1,2 \), which yields $x^* \in \partial \phi_1(\bar{x}; \bar{u}) + \partial \phi_2(\bar{x}; \bar{u})$, 
completing the proof. 
\end{proof}

\section{Subdifferentials of the Value Function}

In this section, we study the \emph{directional subdifferentials} of the value function \( V \). 
By employing the calculus developed in the preceding section, we derive estimates for both the directional and singular subdifferentials of \( V \), thereby extending the finite-dimensional results to Asplund spaces. 
The main challenge arises from the lack of compactness in infinite-dimensional settings, which we overcome through the use of suitable variational conditions in different cases. 

\subsection{Continuity of the Value Function}
In subdifferential analysis, it is common to consider lower semicontinuous functions. 
Therefore, before analyzing subdifferentials, we first examine the continuity of \( V \). 
To this end, we introduce the concept of \(\tau\)-restricted inf-compactness for the value function \( V \), which extends the notion of inf compactness to settings involving weaker topologies or directional structures.

\begin{definition}[\(\tau\)-Restricted Inf-Compactness]\label{defn: restricted inf compactness}
Let \( V : X \to \mathbb{R} \cup \{\infty\} \) be the value function in \eqref{eq: basic model of lower level}. 
Let \( \tau \) be a locally convex Hausdorff linear topology on the decision space \( Y \). 
We say that \( V \) is \emph{\(\tau\)-restricted inf-compact} at \( \bar{x} \in X \) if \( V(\bar{x}) < \infty \) and there exist \( \varepsilon > 0 \), a \( \tau \)-sequentially compact set \( \Omega \subset Y \), and a neighborhood \( U \subset X \) of \( 0 \) such that for all \( x \in \bar{x} + U \) satisfying \( V(x) < V(\bar{x}) + \varepsilon \), we have $S(x) \cap \Omega \neq \emptyset$. 

If \( \tau \) is taken to be the norm topology on \( Y \), we omit the reference to \( \tau \) and simply say that \( V \) is \emph{restricted inf-compact} at \( \bar{x} \). 
Moreover, when \( \tau \) is the norm topology on $Y$, and the neighborhood $\bar{x} + U$ is replaced by a directional neighborhood \( \mathcal{V}(\bar{x};\bar{u}) \) in direction $\bar{u} \in X$ (along with a norm-compact set denoted by \( \Omega_{\bar{u}} \subset Y \)), we say that \( V \) is \emph{directionally restricted inf-compact} at $\bar{x}$ in direction \( \bar{u} \). 
When \( \bar{u} = 0 \), this reduces to the standard restricted inf-compactness defined above.
\end{definition}

To illustrate the concept of \(\tau\)-restricted inf-compactness in the weak topology, we provide the following example. 

\begin{example}\label{example weak infcompact}
Let \( X,Y \) be reflexive Banach spaces.   
Recall that any continuous convex function \( h : X \to \mathbb{R} \) is weakly sequentially lower semicontinuous; see \cite[Theorem~1.18]{hinze2008optimization}.  
A set \( A \subset X \) is called \emph{weakly sequentially compact} (w.s.c.) if every sequence in \( A \) admits a subsequence converging weakly to a point in \( A \).  
It is well known that closed convex sets are weakly sequentially closed \cite[Theorem~1.16]{hinze2008optimization}; any bounded, closed, and convex subset of a reflexive Banach space is weakly sequentially compact \cite[Theorem~1.17]{hinze2008optimization}.

A set-valued mapping \( \Gamma : X \rightrightarrows Y \) is said to be \emph{locally uniformly weakly sequentially compact} at \( \bar{x} \in X \) if there exists a neighborhood \( U \) of \( \bar{x} \) such that $\bigcup_{x \in U} \Gamma(x)$ 
is weakly sequentially compact.  
If the neighborhood \( U \) is replaced by a directional neighborhood \( \mathcal{V}(\bar{x};u) \), we say that \( \Gamma \) is \emph{locally uniformly weakly sequentially compact in direction \( u \)}.  Then 
under these conditions, \( V \) satisfies the (directional) restricted inf-compactness property.
\end{example}

Based on the restricted inf-compactness of \( V \), we establish its lower semicontinuity, which extends \cite[Proposition~3.1]{Bai2023Directional} to infinite-dimensional settings.  

\begin{theorem}[Lower Semicontinuity of the Value Function]\label{theorem of lsc for V}
Suppose \( V \) is \(\tau\)-restricted inf-compact at \( \bar{x} \), and the graph of the feasible mapping \( \Gamma \) is norm-\(\tau\)-closed and $f$ is norm-$\tau$ lower semicontinuous.   
Then \( V \) is lower semicontinuous at \( \bar{x} \).  
Moreover, if \( V \) is directionally restricted inf-compact at \( \bar{x} \) in direction \( \bar{u} \), then \( V \) is lower semicontinuous at $\bar x$ in direction \( \bar{u} \).
\end{theorem}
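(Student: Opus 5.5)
We argue sequentially, following the standard argument for lower semicontinuity of marginal functions. Take any sequence $x_k \to \bar x$. We must show $\liminf_k V(x_k) \ge V(\bar x)$.

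Passing to a subsequence, we may assume $\lim_k V(x_k) = \liminf_k V(x_k) =: \alpha$. If $\alpha \ge V(\bar x)$ there is nothing to prove, so suppose $\alpha < V(\bar x)$.

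Then $\alpha < V(\bar x) < V(\bar x)+\varepsilon$, so for all large $k$ we have $x_k \in \bar x + U$ and $V(x_k) < V(\bar x) + \varepsilon$. This also rules out $\alpha=+\infty$. By $\tau$-restricted inf-compactness we may pick $y_k \in S(x_k)\cap \Omega$. In particular $V(x_k)$ is attained, so $V(x_k) = f(x_k,y_k)$ and $(x_k,y_k)\in \operatorname{gph}\Gamma$.

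Since $\Omega$ is $\tau$-sequentially compact, after a further subsequence $y_k \xrightarrow{\tau} \bar y \in \Omega$. Because $x_k \to \bar x$ in norm and $\operatorname{gph}\Gamma$ is norm-$\tau$-closed, we get $\bar y \in \Gamma(\bar x)$. The norm-$\tau$ lower semicontinuity of $f$ then gives
\[
V(\bar x) \le f(\bar x,\bar y) \le \liminf_{k} f(x_k,y_k) = \lim_k V(x_k) = \alpha,
\]
which contradicts $\alpha<V(\bar x)$. One point needs care: if $V(x_k)=-\infty$ along the subsequence, then $S(x_k)\neq\emptyset$ forces $f(x_k,y_k)=-\infty$. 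This is impossible since $f$ is real-valued, so $V(x_k)$ is finite.

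For the directional statement, the first step is to pin down what has to be shown. We must exhibit $\varepsilon',\delta'$ such that
\[
E:=\operatorname{epi}V\cap\bigl[\overline{\mathcal V_{\varepsilon',\delta'}(\bar x;\bar u)}\times[V(\bar x)-\varepsilon',V(\bar x)+\varepsilon']\bigr]
\]
is closed. Choose $\varepsilon'\le\varepsilon/2$ and $\varepsilon',\delta'$ small enough that the closed wedge $\overline{\mathcal V_{\varepsilon',\delta'}(\bar x;\bar u)}$ is contained in the directional neighborhood from the definition. Shrinking the parameters of $\mathcal V$ moves its closure inside a slightly larger wedge, so this is possible.

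Now take $(x_k,r_k)\in E$ with $(x_k,r_k)\to(x,r)$. Then $x$ lies in the closed wedge and $|r-V(\bar x)|\le\varepsilon'$. Moreover $V(x_k)\le r_k < V(\bar x)+\varepsilon$, so as before there are $y_k\in S(x_k)\cap\Omega_{\bar u}$, with $\Omega_{\bar u}$ norm compact. A subsequence gives $y_k\to y$ in norm, with $y\in\Gamma(x)$ by closedness of $\operatorname{gph}\Gamma$ (norm topology). Then
\[
V(x)\le f(x,y)\le\liminf_k f(x_k,y_k)\le \lim_k r_k = r,
\]
so $(x,r)\in E$ and $E$ is closed.

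The main obstacle is this bookkeeping: keeping the limit points $x$ inside the region where inf-compactness applies, which is handled by the nested choice of wedges. In the nondirectional part the only subtle point is making sure inf-compactness is invoked only for indices with $V(x_k)<V(\bar x)+\varepsilon$.
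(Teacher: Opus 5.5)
Your proof is correct. The nondirectional part is the paper's argument: pick minimizers $y_k\in S(x_k)\cap\Omega$, extract a $\tau$-convergent subsequence, then use closedness of $\operatorname{gph}\Gamma$ and lower semicontinuity of $f$. You add a bit more care about infinite values.

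The directional part is where you genuinely diverge. The paper reruns the same contradiction argument along sequences $x_k=\bar x+t_ku_k$ with $t_k\downarrow 0$, $u_k\to\bar u$. What it actually establishes is only the pointwise inequality $\liminf_k V(\bar x+t_ku_k)\ge V(\bar x)$.

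You instead prove that $\operatorname{epi}V\cap\bigl[\overline{\mathcal V_{\varepsilon',\delta'}(\bar x;\bar u)}\times[V(\bar x)-\varepsilon',V(\bar x)+\varepsilon']\bigr]$ is closed. That is local lower semicontinuity in direction $\bar u$ in the sense of the paper's own definition. Your argument is direct rather than by contradiction. It applies inf-compactness at the approximating points $x_k$ of the wedge, not at the limit point. It therefore uses the fact that directional restricted inf-compactness holds on the whole directional neighborhood, not just along sequences converging to $\bar x$.

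This buys you more than the paper's version:
\begin{enumerate}
\item[(a)] Closedness of that set implies the paper's sequential inequality. Take $r_k:=\max\{V(x_k),V(\bar x)-\varepsilon'\}$ and pass to the limit.
\item[(b)] Local directional lower semicontinuity is the hypothesis required in Proposition~\ref{prop:singular_subdiff_Lipschitz}, and the pointwise inequality does not supply it.
\end{enumerate}

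One simplification: as defined, $\mathcal V_{\varepsilon,\delta}(\bar x;\bar u)$ is already closed. It is cut out of the closed ball $\varepsilon B_X$ by a non-strict inequality between continuous functions of $z$. So your nested-wedge step, which you flagged as the main obstacle, reduces to choosing $\varepsilon'\le\min\{\varepsilon_0,\varepsilon/2\}$ and $\delta'\le\delta_0$. Here $\mathcal V_{\varepsilon_0,\delta_0}(\bar x;\bar u)$ is the neighborhood from the inf-compactness hypothesis.
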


\begin{proof}
Let \( \{x_k\} \subset X \) be any sequence such that \( x_k \to \bar{x} \) and $\liminf_{x \to \bar{x}} V(x) = \lim_{k \to \infty} V(x_k)$. 
Assume, to the contrary, that $\lim_{k \to \infty} V(x_k) < V(\bar{x})$. 

Then, by the \(\tau\)-restricted inf-compactness of \( V \), for all sufficiently large \( k \) we have \( S(x_k) \neq \emptyset \); that is, there exist \( y_k \in S(x_k) \) such that \( V(x_k) = f(x_k, y_k) \).  

Because \( \Omega \) in Definition~\ref{defn: restricted inf compactness} is \(\tau\)-compact, and \( \{y_k\} \subset \Omega \), we may, without loss of generality, assume that \( y_k \xrightarrow{\tau} \bar{y} \in \Omega \).  
By the norm-\(\tau\)-closedness of \( \operatorname{gph}\Gamma \), it follows that \( \bar{y} \in \Gamma(\bar{x}) \).  
Hence, by the lower semicontinuity of $f$, $\lim_{k \to \infty} f(x_k, y_k) \geq f(\bar{x}, \bar{y})$, and consequently,                                
\[
V(\bar{x}) > \lim_{k \to \infty} V(x_k) = \lim_{k \to \infty} f(x_k, y_k) 
    \geq f(\bar{x}, \bar{y}) \geq V(\bar{x}),\]
which yields a contradiction. 
Therefore, \( V \) must be lower semicontinuous at \( \bar{x} \).

For the directional case, consider a sequence \( x_k = \bar{x} + t_k u_k \) with \( t_k \downarrow 0 \) and \( u_k \to \bar{u} \).  
Repeating the same argument under the assumption of directional restricted inf-compactness (i.e., replacing the neighborhood by some \( \mathcal{V}(\bar{x}; \bar{u}) \)), we obtain the directional lower semicontinuity of \( V \) in direction \( \bar{u} \). 
\end{proof}

We will need the following concept of directional solution.
\begin{definition}[Directional Solution {\cite[Definition 4.5]{bai2022directional}}]
    The optimal solution in direction $u$ is defined by
    \begin{equation*}
        S(\bar{x}; u) = \left\{ y \in S(\bar{x}) \;\middle|\; \exists t_k \downarrow 0, \, u_k \to u, \, y_k \to y, \, y_k \in S(\bar{x} + t_k u_k) \right\}.
    \end{equation*}
\end{definition}

\subsection{Directional Subdifferentials of the Value Function}

In this subsection, we employ variational analysis to derive directional subdifferentials of value functions in Asplund spaces. 
These results extend analogous findings in finite-dimensional cases \cite{benko2019calculus,Bai2023Directional} and provide stronger conclusions in Asplund spaces compared to those established for general Banach spaces \cite{long2017calculus}. 

We introduce the notion of \emph{strongly directional inner semicompactness}. This concept serves as a key tool for extending subdifferential results from finite-dimensional settings to infinite-dimensional ones. 

\begin{definition}[\cite{long2017calculusb}, Definition~1.1]\label{defn: directional strong inner semicompactness}
Let \( F: X \rightrightarrows Y \) be a set-valued mapping. 
\( F \) is said to be strongly directional inner semicompact at \( (\bar{x}, \bar{y}) \in \operatorname{gph} F \) in the direction \( (\bar{u},\bar v) \in X\times Y \) if, for any sequences \( t_k \downarrow 0 \) and \( u_k \to \bar{u} \) with \( F(\bar{x} + t_k u_k) \neq \emptyset \), there exist \( y_k \in F(\bar{x} + t_k u_k) \) such that the sequence \( \left\{ \frac{y_k - \bar{y}}{t_k} \right\} \) admits a convergent subsequence to $\bar v$.
\end{definition}

\begin{remark}\label{rem:inner_calm}
    A set-valued mapping $F: X \rightrightarrows Y$ is called directionally inner calm at $(\bar{x}, \bar{y}) \in \operatorname{gph} F$ in a given direction $u \in X$ if there exist a constant $\kappa > 0$ and a directional neighborhood $\mathcal{V}(\bar{x}; u)$ such that \cite[Definition 2.2]{benko2019calculus}
    \begin{equation}\label{eq:inner_calm}
        \bar{y} \in F(x) + \kappa \|x - \bar{x}\| B_Y, \quad \forall x \in \mathcal{V}(\bar{x}; u).
    \end{equation}

    Consequently, for any sequences $t_k \downarrow 0$ and $u_k \to u$, by setting $x_k := \bar{x} + t_k u_k$, \eqref{eq:inner_calm} ensures the existence of $y_k \in F(x_k)$ satisfying $\|y_k - \bar{y}\| \leq \kappa t_k \|u_k\|$. Dividing both sides by $t_k$, we deduce that the difference quotient sequence $\left\{ \frac{y_k - \bar{y}}{t_k} \right\}$ is bounded by $\kappa \|u\|+1$. When $Y$ is finite-dimensional, this boundedness guarantees the existence of a convergent subsequence and a direction $v \in Y$ such that $\frac{y_{k_j} - \bar{y}}{t_{k_j}} \to v$. Thus, when $Y$ is a finite-dimensional space, directional inner calmness naturally implies strongly directional inner semicompactness. 
\end{remark}

In what follows, we introduce a concept and establish a proposition to verify the strongly directional inner semicompactness of a set-valued mapping. 

\begin{definition}\label{defn:directionally_single_valued}
    We say that $F: X \rightrightarrows Y$ has a Hadamard single-valued localization at $(\bar{x}, \bar{y})$ in the direction $\bar{u}$ if there exist a directional neighborhood $\mathcal{V}(\bar{x}; \bar{u})$, a neighborhood $U_{\bar{y}}$ of $\bar{y}$, and a mapping $\varphi: X \to Y$ with $\varphi(\bar{x}) = \bar{y}$ such that $\{\varphi(x)\} = F(x) \cap U_{\bar{y}}$ for any $x \in \mathcal{V}(\bar{x}; \bar{u})$, and $\varphi$ is Hadamard differentiable at $\bar{x}$ in the direction $\bar{u}$ and locally Lipschitzian in $\mathcal{V}(\bar{x}; \bar{u})$. 
\end{definition}

\begin{proposition}\label{prop: single_valued_implies_semicompact}
    If $F: X \rightrightarrows Y$ has a Hadamard single-valued localization at $(\bar{x}, \bar{y})$ in the direction $\bar{u}$, then $F$ is strongly directionally inner semicompact at $(\bar x,\bar y)$ in the direction $(\bar u,\bar v)$ for some $\bar v\in Y$. 
\end{proposition}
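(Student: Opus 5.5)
The natural candidate for the limiting direction is $\bar v := \varphi'_H(\bar x;\bar u)$, where $\varphi$ is the Hadamard single-valued localization from Definition~\ref{defn:directionally_single_valued}. The plan is to select $y_k := \varphi(x_k)$ along any admissible sequence and show that the difference quotients converge (in norm, along the whole sequence) to $\bar v$.

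First, fix sequences $t_k \downarrow 0$ and $u_k \to \bar u$ with $F(\bar x + t_k u_k) \neq \emptyset$, and set $x_k := \bar x + t_k u_k$. We check that $x_k \in \mathcal{V}(\bar x;\bar u)$ for all large $k$. If $\bar u = 0$, the directional neighborhood is an ordinary neighborhood, and this holds because $x_k \to \bar x$. If $\bar u \neq 0$, write $z_k := t_k u_k$. Then $\|z_k\| \to 0$, and
\[
\bigl\|\,\|\bar u\| z_k - \|z_k\|\bar u\,\bigr\| = t_k\,\bigl\|\,\|\bar u\| u_k - \|u_k\|\bar u\,\bigr\|,
\]
where the factor $\bigl\|\,\|\bar u\| u_k - \|u_k\|\bar u\,\bigr\|$ tends to $0$. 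Meanwhile $\|z_k\|\|\bar u\| = t_k\|u_k\|\|\bar u\|$, and $\|u_k\|\|\bar u\| \to \|\bar u\|^2 > 0$. Hence the wedge inequality $\bigl\|\,\|\bar u\| z_k - \|z_k\|\bar u\,\bigr\| \le \delta\|z_k\|\|\bar u\|$ holds eventually for any fixed $\delta>0$, so $x_k \in \mathcal{V}_{\varepsilon,\delta}(\bar x;\bar u)$ for large $k$.

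Second, for such $k$ the localization gives $\{\varphi(x_k)\} = F(x_k)\cap U_{\bar y}$, so $y_k := \varphi(x_k) \in F(x_k)$ is a legitimate choice. For the finitely many initial indices, pick any $y_k \in F(x_k)$; this is possible by the nonemptiness assumption and does not affect the limit. Hadamard differentiability of $\varphi$ at $\bar x$ in direction $\bar u$, together with $\varphi(\bar x) = \bar y$, then yields
\[
\frac{y_k - \bar y}{t_k} = \frac{\varphi(\bar x + t_k u_k) - \varphi(\bar x)}{t_k} \longrightarrow \varphi'_H(\bar x;\bar u) = \bar v,
\]
which is exactly the property required in Definition~\ref{defn: directional strong inner semicompactness}, with the full sequence converging.

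The only delicate point is the membership of $x_k$ in the directional neighborhood, particularly when $\|u_k\|$ is small relative to $\|\bar u\|$. Since $\|u_k\| \to \|\bar u\|$, the case $\bar u \neq 0$ causes no real difficulty. The local Lipschitz property of $\varphi$ is not needed for this argument; only the Hadamard differentiability and the localization are used. One subtlety is worth noting: the neighborhood $U_{\bar y}$ plays no role in the convergence itself, since $\varphi(x_k)$ automatically lies in $F(x_k)$ by the localization identity.
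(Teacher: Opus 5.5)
Your proposal is correct and follows the paper's proof: select $y_k:=\varphi(x_k)$ once $x_k=\bar x+t_ku_k$ lies in the directional neighborhood, then use Hadamard differentiability of $\varphi$ to get convergence to $\bar v:=\varphi'_H(\bar x;\bar u)$. Your explicit check that $x_k$ eventually enters the wedge $\mathcal{V}_{\varepsilon,\delta}(\bar x;\bar u)$, and your treatment of the finitely many initial indices, fill in details the paper asserts without proof.
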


\begin{proof}
    By Definition \ref{defn:directionally_single_valued}, there exist a directional neighborhood $\mathcal{V}(\bar{x}; \bar{u})$ and a mapping $\varphi: X \to Y$ with $\varphi(\bar{x}) = \bar{y}$ such that $\{\varphi(x)\} = F(x) \cap U_{\bar{y}}$ for any $x \in \mathcal{V}(\bar{x}; \bar{u})$. Moreover, $\varphi$ is Hadamard directionally differentiable at $\bar{x}$ in the direction $\bar{u}$. 
    
    Consequently, for any sequences $t_k \downarrow 0$ and $u_k \to \bar{u}$, setting $x_k := \bar{x} + t_k u_k$, we have $x_k \in \mathcal{V}(\bar{x}; \bar{u})$ for all sufficiently large $k$. This allows us to uniquely select $y_k := \varphi(x_k) \in F(x_k)$. By the Hadamard directional differentiability of $\varphi$, we directly obtain:
    \begin{equation*}
        \lim_{k \to \infty} \frac{y_k - \bar{y}}{t_k} = \lim_{k \to \infty} \frac{\varphi(\bar{x} + t_k u_k) - \varphi(\bar{x})}{t_k} = \varphi'_H(\bar{x}; \bar{u}).
    \end{equation*}
    Setting $\bar{v} := \varphi'_H(\bar{x}; \bar{u}) \in Y$ completes the proof.
\end{proof}

The following remark demonstrates that this semicompactness property holds in a generalized equation system framework.

\begin{definition}\label{defn: strong approximation_in_y}\cite[Definition 2.3]{dontchev1995implicit}
    Let $\varphi: X \times Y \to Z$ and $g: Y \to Z$ be mappings between Banach spaces. $g$ is said to \emph{strongly approximate} $\varphi$ in $y$ at the reference point $(\bar{x}, \bar{y})$ if, for each $\varepsilon > 0$, there exist a neighborhood $U$ of $\bar{x}$ and a neighborhood $V$ of $\bar{y}$ such that 
    \begin{equation*}
        \big\| [\varphi(x, y_1) - g(y_1)] - [\varphi(x, y_2) - g(y_2)] \big\| \le \varepsilon \|y_1 - y_2\|,
    \end{equation*}
    whenever $x \in U$ and $y_1, y_2 \in V$. If $\varphi$ is Fr\'echet differentiable with respect to $y$ at $(\bar x,\bar y)$, then $g$ can be taken as $g(y)=\varphi(\bar x,\bar y)+\nabla_y \varphi(\bar x,\bar y)(y-\bar y)$.   
\end{definition} 

\begin{remark}\label{rem:strongly_directional_inner_semicompactness_ift}
Consider the generalized equation $P(x, y) \in \Lambda$ at a reference solution $(\bar{x}, \bar{y})$, and let $\Gamma(x) := \{ y \in Y \mid P(x, y) \in \Lambda \}$ be its associated solution mapping. 

Following \cite[Theorem 2.4]{dontchev1995implicit}, assume the following conditions hold:
(i) $P$ is Lipschitz continuous in $x$ uniformly with respect to $y$ at $(\bar{x}, \bar{y})$, and $P(\cdot, \bar{y})$ is Hadamard directionally differentiable at $\bar{x}$;
(ii) there exists a function $g$ that strongly approximates $P$ in $y$ at $(\bar{x}, \bar{y})$ with $g(\bar{y}) = P(\bar{x}, \bar{y})$;
(iii) the set-valued mapping $z \mapsto \{y \in Y \mid g(y) - z \in \Lambda\}$ admits a Hadamard single-valued localization $\xi$ at $(0, \bar{y})$. 

Under these conditions \cite[Theorem 2.4, Remark 2.6]{dontchev1995implicit}, $\Gamma$ admits a single-valued localization $\phi$ around $\bar{x}$ for $\bar{y}$. Moreover, this implicit function $\phi$ inherits the Hadamard directional differentiability at $\bar{x}$, with its directional derivative explicitly given by $\phi'_H(\bar{x}; \bar{u}) = \xi'_H\big(0; -P'_H(\cdot, \bar{y})(\bar{x}; \bar{u})\big)$ for any direction $\bar{u} \in X$. 
Consequently, Proposition \ref{prop: single_valued_implies_semicompact} ensures that $\Gamma$ is strongly directionally inner semicompact at $(\bar{x}, \bar{y})$ in the direction $(\bar{u}, \phi'_H(\bar{x}; \bar{u}))$.
\end{remark}
 Consider the linearization cone defined by
    \begin{equation*}
        \mathbb{L}(x, y; u) := \{ v \mid DP(x, y)(u, v) \cap T(P(x, y); \Lambda) \neq \emptyset \}.
    \end{equation*}
    Associated with this, we define the critical cone at $(\bar x,\bar y)$ in the direction $\bar u$ as 
    \begin{equation*}
        C(\bar{x}, \bar{y}; \bar{u}) := \left\{ \bar{v} \in \mathbb{L}(\bar{x}, \bar{y}; \bar{u}) \mid f'_{-}((\bar{x}, \bar{y}); (\bar{u}, \bar{v})) \leq V'_{+}(\bar{x}; \bar{u}), \; V'_{-}(\bar{x}; \bar{u}) \leq f'_{+}((\bar{x}, \bar{y}); (\bar{u}, \bar{v})) \right\}.
    \end{equation*}
Define the multiplier set in a direction $(\bar u,\bar v)$ at $(\bar x,\bar y)$:  
\begin{equation*}
    \mathcal{M}_P(\bar{x}, \bar{y}; \Lambda;\bar{u}, \bar{v}) := \left\{ z^* \in N(P(\bar{x}, \bar{y}); \Lambda; d) \;\middle|\; d = P'_H(\bar{x}, \bar{y}; \bar{u}, \bar{v}) \in T(P(\bar{x}, \bar{y});\Lambda) \right\}. 
\end{equation*}
When $S$ is strongly directional inner semicompact, we obtain the following theorem. It improves upon the results in \cite[Theorems~3.1(iv) and 3.2(iv)]{Bai2023Directional}, as it holds under weaker conditions even in finite-dimensional settings. This is because directional inner calmness implies strongly directional inner semicompactness, as noted in Remark~\ref{rem:inner_calm}.     

\begin{theorem}\label{thm: directional subdifferential of V with sdic of s}
Let $\bar{y} \in S(\bar{x})$ be given. Assume that the solution mapping $S$ is strongly directional inner semicompact at $(\bar{x}, \bar{y})$ in the direction $(\bar u, \bar v)$.  
Furthermore, assume that $P$ is Hadamard differentiable at $(\bar{x}, \bar{y})$ in the direction $(\bar u, \bar v)$, weak* strictly Lipschitzian at $(\bar{x}, \bar y)$ and $\Psi(x, y) := \Lambda - P(x, y)$ is metrically subregular in the direction $(\bar u, \bar v)$ at $(\bar{x}, \bar{y})$. Then, $\bar v\in C(\bar{x}, \bar{y}; \bar{u})$ and the directional subdifferential of $V$ satisfies:
\begin{equation*}
\begin{split}
    \partial V(\bar{x}; \bar u) \subset 
    \{ x^*  \;\mid\;
        (x^*, 0) \in \partial f(\bar{x}, \bar{y}; \bar u, \bar v) + \partial \langle z^*, P \rangle(\bar{x}, \bar{y}; \bar u, \bar v),~z^* \in \mathcal{M}_P(\bar{x}, \bar{y}; \Lambda; \bar u, \bar v ) \};
\end{split}
\end{equation*}
\begin{equation*}
\begin{split}
    \partial^\infty V(\bar{x}; \bar u) \subset \{ x^* ~\mid ~ 
 (x^*, 0) \in &\partial \langle z^*, P \rangle(\bar{x}, \bar{y}; \bar u, \bar v),~ z^* \in \mathcal{M}_P(\bar{x}, \bar{y};\Lambda; \bar u, \bar{v})
    \}.
\end{split}
\end{equation*}
\end{theorem}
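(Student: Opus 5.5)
The plan is to follow the finite-dimensional approach of \cite{Bai2023Directional} and \cite{benko2019calculus}: take a directional subgradient of $V$, lift it to a Fréchet subgradient of $f+\delta_{\operatorname{gph}\Gamma}$ along a sequence of solutions, and then split it with the sum rule (Theorem~\ref{thm: sum rule}) and the preimage/chain rules (Theorems~\ref{thm: directional normal cones of preimage sets}, \ref{thm: chain rule}, \ref{thm:scalarization}). Strong directional inner semicompactness of $S$ replaces the compactness argument that finite dimensions would otherwise supply.

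\textbf{Step 1: lifting to a joint subgradient.} Let $x^*\in\partial V(\bar x;\bar u)$. Since $V$ is lower semicontinuous in the relevant sense, we may drop $\varepsilon$. So there are $t_k\downarrow0$, $u_k\to\bar u$ and $x_k=\bar x+t_ku_k$ with $V(x_k)\to V(\bar x)$ and $x_k^*\in\hat\partial V(x_k)$, $x_k^*\xrightarrow{w^*}x^*$. Because $V$ is finite there, $S(x_k)$ is nonempty, and strong directional inner semicompactness gives $y_k\in S(x_k)$ with $(y_k-\bar y)/t_k\to\bar v$ along a subsequence.

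Since $V(x)\le f(x,y)$ on $\operatorname{gph}\Gamma$ and $V(x_k)=f(x_k,y_k)$, the definition of $\hat\partial V(x_k)$ yields
\[
(x_k^*,0)\in\hat\partial\bigl(f+\delta_{\operatorname{gph}\Gamma}\bigr)(x_k,y_k).
\]
As $(x_k,y_k)=(\bar x,\bar y)+t_k(u_k,(y_k-\bar y)/t_k)$ and $f(x_k,y_k)\to f(\bar x,\bar y)$, this gives $(x^*,0)\in\partial(f+\delta_{\operatorname{gph}\Gamma})((\bar x,\bar y);(\bar u,\bar v))$.

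\textbf{Step 2: critical cone.} To show $\bar v\in C(\bar x,\bar y;\bar u)$, first note that $y_k\in\Gamma(x_k)$ and $P$ is Hadamard differentiable. Hence $(P(x_k,y_k)-P(\bar x,\bar y))/t_k\to d:=P'_H(\bar x,\bar y;\bar u,\bar v)$ with $P(x_k,y_k)\in\Lambda$, so $d\in T(P(\bar x,\bar y);\Lambda)$ and $\bar v\in\mathbb L$.

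Next, the Dini inequalities follow from $V(x_k)=f(x_k,y_k)$ together with $f$ being locally Lipschitz, which lets us replace $(u_k,(y_k-\bar y)/t_k)$ by $(\bar u,\bar v)$ at cost $o(t_k)$. This gives
\[
f'_-((\bar x,\bar y);(\bar u,\bar v))\le\liminf_k\frac{V(x_k)-V(\bar x)}{t_k}.
\]
One subtlety is that the $\liminf$ along the chosen sequence must be compared with $V'_+$ and $V'_-$, which are taken along the ray $\bar u$. I would handle this by noting that the inner semicompactness hypothesis applies to every sequence, in particular $u_k=\bar u$. For the other inequality I would use $V(\bar x+t\bar u)\le f(\bar x+t\bar u,\bar y+t\bar v+o(t))$ together with feasibility, which again comes from the sequence given by semicompactness.

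\textbf{Step 3: splitting.} Apply Theorem~\ref{thm: sum rule} with $\phi_1=f$ (locally Lipschitz) and $\phi_2=\delta_{\operatorname{gph}\Gamma}$. This gives $(x^*,0)=\xi_1+\xi_2$ with $\xi_1\in\partial f(\bar x,\bar y;\bar u,\bar v)$ and $\xi_2\in N((\bar x,\bar y);\operatorname{gph}\Gamma;(\bar u,\bar v))$.

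Since $\operatorname{gph}\Gamma=P^{-1}(\Lambda)$ and $\Psi$ is directionally metrically subregular, Theorem~\ref{thm: directional normal cones of preimage sets} gives $\xi_2\in D_N^*P((\bar x,\bar y);(\bar u,\bar v))(z^*)$ with $z^*\in N(P(\bar x,\bar y);\Lambda;d)=\mathcal M_P$. Theorem~\ref{thm:scalarization}, which uses the weak* strict Lipschitz property, then turns this into $\xi_2\in\partial\langle z^*,P\rangle(\bar x,\bar y;\bar u,\bar v)$.

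\textbf{Singular case.} Here $x_k^*\in\lambda_k\hat\partial V(x_k)$ with $\lambda_k\downarrow0$. The same lifting gives $(x_k^*,0)\in\lambda_k\hat\partial(f+\delta)$. Applying the fuzzy sum rule as in the proof of Theorem~\ref{thm: sum rule}, the $f$-part has the form $\lambda_k\times(\text{bounded})$, since $f$ is Lipschitz, and therefore tends to $0$. What remains is a directional normal to $\operatorname{gph}\Gamma$, and the argument finishes as before.

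\textbf{Main obstacle.} I expect the critical-cone inequalities in Step 2 to be the most delicate point, because the Dini derivatives of $V$ are taken along the fixed ray $\bar u$ while the data come along perturbed sequences. A second point needing care is the justification that $V$ is lower semicontinuous, so that $\varepsilon$ may be dropped. If that is unavailable, I would carry the $\varepsilon_k$ through the argument using $\hat\partial_{\varepsilon_k}$ and fuzzy rules, which works equally well in Asplund spaces.
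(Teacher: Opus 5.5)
Your proof follows the paper's route. The paper also takes $y_k\in S(x_k)$ from semicompactness and observes that $(x_k,y_k)$ locally minimizes $f-\langle x_k^*,\cdot-x_k\rangle+\varepsilon_k\|\cdot-x_k\|+\delta_\Lambda\circ P$, which is your lifting to $(x_k^*,0)\in\hat\partial(f+\delta_{\operatorname{gph}\Gamma})(x_k,y_k)$. It then splits with the fuzzy sum rule at perturbations of size $t_k^2$ (what Theorem~\ref{thm: sum rule} packages) and invokes Theorem~\ref{thm: chain rule}, which is itself Theorem~\ref{thm: directional normal cones of preimage sets} followed by Theorem~\ref{thm:scalarization}. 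Your version adds three things the paper leaves out:
your direct use of the preimage theorem on $\operatorname{gph}\Gamma=P^{-1}(\Lambda)$ matches the hypothesis on $\Psi$ literally;
your check that $d\in T(P(\bar x,\bar y);\Lambda)$ supplies a hypothesis of that theorem which the paper never verifies;
your critical-cone argument (semicompactness along $u_k\equiv\bar u$ plus Lipschitz continuity of $f$) is an actual proof, where the paper only points to \cite{bai2022directional}.
Your $\varepsilon_k$ fallback is also sound, since the lifting inequality holds verbatim for $\hat\partial_{\varepsilon_k}$.

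One step fails, however: ``because $V$ is finite there, $S(x_k)$ is nonempty.'' A finite infimum need not be attained; this is exactly the missing compactness. Definition~\ref{defn: directional strong inner semicompactness} only produces $y_k$ when $S(\bar x+t_ku_k)\neq\emptyset$, and your ray argument for the Dini inequalities needs $S(\bar x+t\bar u)\neq\emptyset$ for the same reason.

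This cannot be patched from the stated hypotheses. Take $X=Y=Z=\mathbb{R}$, $f(x,y)=-x\,y^2/(1+y^2)$, $P\equiv 0$, $\Lambda=\mathbb{R}$, $\bar x=\bar y=0$, $(\bar u,\bar v)=(1,0)$. Then $S(0)=\mathbb{R}$, $S(x)=\{0\}$ for $x<0$, and $S(x)=\emptyset$ for $x>0$, since the infimum $-x$ is not attained. So semicompactness holds vacuously and the remaining hypotheses are trivial. Yet $V(x)=-x$ for $x>0$ gives $\partial V(0;1)=\{-1\}$. On the other side, $\partial f((0,0);(1,0))=\{(0,0)\}$ and $\mathcal M_P=\{0\}$, so the right-hand side of the first estimate is $\{0\}$. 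Moreover $V'_+(0;1)=-1<0=f'_-((0,0);(1,0))$, so $\bar v\notin C(\bar x,\bar y;\bar u)$.

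The paper's proof makes the same tacit assumption. The fix is to assume explicitly that $S(x)\neq\emptyset$ for all $x$ in some directional neighborhood $\mathcal V(\bar x;\bar u)$. Both the subgradient sequence $\bar x+t_ku_k$ and the ray points $\bar x+t\bar u$ eventually lie in such a neighborhood, and with this addition your argument goes through as written.
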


\begin{proof}
    Let $x^* \in \partial V(\bar{x}; \bar{u})$. By definition, there exist sequences $t_k \downarrow 0$, $u_k \to \bar{u}$, and $x_k := \bar{x} + t_k u_k$ along with scalars $\varepsilon_k \downarrow 0$ such that local subgradients $x_k^* \in \hat{\partial} V(x_k)$ satisfying $x_k^* \xrightarrow{w^*} x^*$. 
    By the strongly directional inner semicompactness of $S$ at $(\bar x,\bar y)$ in the direction $(\bar u,\bar v)$, there exists a sequence $y_k \in S(x_k)$ such that $\frac{y_k - \bar{y}}{t_k} \to \bar{v}$ as $k \to \infty$.  
    
    Since $f(x_k, y_k) = V(x_k)$, $(x_k, y_k)$ is a local minimizer of the following problem:
    \begin{equation*}
        \min_{x, y} \left\{ \phi_k(x, y) := f(x, y) - \langle x_k^*, x - x_k \rangle + \varepsilon_k \|x - x_k\| + \delta_\Lambda(P(x, y)) \right\}.
    \end{equation*}
    Applying the fuzzy sum rule for Fr\'echet subdifferentials in Asplund spaces \cite[Theorem~2.33(b)]{Mordukhovich2006variational} (with the parameter $\gamma=\epsilon := t_k^2$ therein), we can find $e_{1,k} \in \mathbb{B}_X$ and $e_{2,k} \in \mathbb{B}_Y$ such that, by defining the perturbed points $(q_{1,k}, q_{2,k}) := (x_k + t_k^2 e_{1,k}, y_k + t_k^2 e_{2,k})$, we obtain the inclusion:
    \begin{equation*}
        (0, 0) \in \hat{\partial} f(q_{1,k}, q_{2,k}) - (x_k^*, 0) + (\varepsilon_k + 2t_k^2) \mathbb{B}^*_X \times \{0\} + \hat{\partial}(\delta_\Lambda \circ P)(q_{1,k}, q_{2,k}).
    \end{equation*}
    Note that the difference quotients of the perturbed sequences yield the same limits; that is, $\frac{q_{1,k} - \bar{x}}{t_k} \to \bar{u}$ and $\frac{q_{2,k} - \bar{y}}{t_k} \to \bar{v}$. 
    Taking the limit as $k \to \infty$ and invoking the definition of the directional limiting subdifferential along the direction $(\bar{u}, \bar{v})$, we arrive at: 
    \begin{equation*}
        (x^*, 0) \in \partial f(\bar{x}, \bar{y}; \bar{u}, \bar{v}) + \partial(\delta_\Lambda \circ P)(\bar{x}, \bar{y}; \bar{u}, \bar{v}).
    \end{equation*}
    
    Finally, since $P$ is Hadamard directionally differentiable and the metric subregularity holds, applying the chain rule (Theorem \ref{thm: chain rule}) yields: for $d := P'_H((\bar{x}, \bar{y}); (\bar{u}, \bar{v})) \in T(P(\bar{x}, \bar{y});\Lambda)$, we have 
    \begin{equation*}
        (x^*, 0) \in \partial f((\bar{x}, \bar{y}); (\bar{u}, \bar{v})) + \partial \langle z^*, P \rangle((\bar{x}, \bar{y}); (\bar{u}, \bar{v})), \quad \text{for some } z^* \in N(P(\bar{x}, \bar{y}); \Lambda; d).
    \end{equation*}
    Furthermore, following the arguments in \cite[Theorem 3.1(i), Case I]{bai2022directional}, we deduce that $\bar{v} \in C(\bar{x}, \bar{y}; \bar{u})$. This completes the proof for the upper estimate of $\partial V(\bar{x}; \bar{u})$. 
    
    The upper estimate for the directional singular subdifferential $\partial^\infty V(\bar{x}; \bar{u})$ follows analogously. By employing an auxiliary sequence $\ell_k \downarrow 0$ such that $\ell_k x_k^* \xrightarrow{w^*} x^*$ in the preceding arguments, the proof is identical; hence, the redundant details are omitted.
\end{proof}

Consider the lower-level problem:
\begin{equation}\label{eq:example_of_sdic}
    \min_y f(x, y) \quad \text{subject to} \quad P(x, y) \in \Lambda,
\end{equation}
where the constraint space $Z = \mathbb{R}^m$, $\Lambda = \mathbb{R}^m_-$, and $f(x, \cdot),P(x, \cdot)$ are convex.  
Assuming metric subregularity of $y\mapsto \Lambda-P(x,y)$, the solution mapping $S(x)$ is characterized by the Karush--Kuhn--Tucker (KKT) conditions. By introducing the Lagrange multiplier $\lambda \in \mathbb{R}^m$ and defining the primal-dual variable $w := (y, \lambda) \in Y \times \mathbb{R}^m$, the KKT system can be written as:
\begin{equation*}
    \nabla_y f(x, y) + \nabla_y P(x, y)^* \lambda = 0, \quad \lambda \in N(P(x, y);\Lambda).
\end{equation*}

For $\Lambda = \mathbb{R}^m_-$, its polar cone is $\mathbb{R}^m_+$, yielding the equivalence that $\lambda \in N_\Lambda(P(x,y)) \iff P(x,y) \in N_{\mathbb{R}^m_+}(\lambda)$. Therefore, by defining the base mapping $\mathcal{F}: X \times (Y \times \mathbb{R}^m) \to Y \times \mathbb{R}^m$ and the underlying convex set $K$ as follows:
\begin{equation*}
    \mathcal{F}(x, w) := 
    \begin{pmatrix} 
        \nabla_y f(x, y) + \nabla_y P(x, y)^* \lambda \\ 
        -P(x, y) 
    \end{pmatrix}, \quad 
    K := Y \times \mathbb{R}^m_+,
\end{equation*}
the primal-dual solution mapping is given by $S_{KKT}(x) := \{ w \in K \mid 0 \in \mathcal{F}(x, w) + N_K(w) \}$. 

The following corollary shows that, under metric regularity of the KKT system and suitable differentiability assumptions, the required directional semicompactness condition is satisfied. 

\begin{corollary}\label{cor: directional subdifferential kkt polyhedral}
    In the context of \eqref{eq:example_of_sdic}, let
    $\bar{y}\in S(\bar{x})$, let $\bar{u}\in X$ be a given direction,
    and let $\bar{\lambda}$ be a KKT multiplier at 
    $(\bar{x},\bar{y})$. Set $\bar{w}:=(\bar{y},\bar{\lambda})$. 
    Assume that the following conditions hold:
    \begin{enumerate}
        \item[(i)] $f$ and $P$ are continuously differentiable
        near $(\bar{x},\bar{y})$, with $\nabla_y f$ and
        $\nabla_y P$ being locally Lipschitz continuous and Hadamard
        directionally differentiable at $(\bar{x},\bar{y})$.

        \item[(ii)] The partial mapping $w\mapsto\mathcal{F}(\bar{x},w)+N_K(w)$ is metrically regular at $\bar{w}$ for $0$.
    \end{enumerate}
    Then the solution mapping $S$ admits a Hadamard single-valued localization $\phi$ at $(\bar{x},\bar{y})$ in the direction $\bar{u}$. Set $\bar{v}:=\phi'_H(\bar{x};\bar{u})$. 
    Assume in addition that: 
    \begin{enumerate}
        \item[(iii)] The mapping $P$ is weak* strictly Lipschitzian at
        $(\bar{x},\bar{y})$, and $\Psi(x,y):=\Lambda-P(x,y)$ 
        is metrically subregular at
        $\bigl((\bar{x},\bar{y}),0\bigr)$ in the direction
        $(\bar{u},\bar{v})$.
    \end{enumerate}
    Then $\bar{v}\in C(\bar{x},\bar{y};\bar{u})$, and the directional
    subdifferentials of the value function $V$ satisfy
    \begin{equation*}
    \begin{split}
        \partial V(\bar{x};\bar{u})
        \subset
        \Big\{x^*\;\Big|\;
            (x^*,0)\in
            \partial f(\bar{x},\bar{y};\bar{u},\bar{v})
            +\partial\langle z^*,P\rangle
            (\bar{x},\bar{y};\bar{u},\bar{v}),\;
            z^*\in
            \mathcal{M}_P(\bar{x},\bar{y};
            \Lambda;\bar{u},\bar{v})
        \Big\},
    \end{split}
    \end{equation*}
    \begin{equation*}
        \partial^\infty V(\bar{x};\bar{u})
        \subset
        \Big\{x^*\;\Big|\;
            (x^*,0)\in
            \partial\langle z^*,P\rangle
            (\bar{x},\bar{y};\bar{u},\bar{v}),\
            z^*\in
            \mathcal{M}_P(\bar{x},\bar{y};
            \Lambda;\bar{u},\bar{v})
        \Big\}.
    \end{equation*}
\end{corollary}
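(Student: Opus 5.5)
The plan is to reduce the corollary to Theorem~\ref{thm: directional subdifferential of V with sdic of s}. The only hypothesis of that theorem not assumed directly is the strong directional inner semicompactness of $S$ at $(\bar x,\bar y)$ in direction $(\bar u,\bar v)$. By Proposition~\ref{prop: single_valued_implies_semicompact}, it suffices to show that $S$ has a Hadamard single-valued localization at $(\bar x,\bar y)$ in direction $\bar u$. We would then set $\bar v:=\phi'_H(\bar x;\bar u)$.

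\textbf{Step 1 (localization of $S_{KKT}$).} Write $S_{KKT}(x)=\{w\mid 0\in \mathcal F(x,w)+N_K(w)\}$ and apply the implicit function theorem of Dontchev (Remark~\ref{rem:strongly_directional_inner_semicompactness_ift}, i.e.\ \cite[Theorem 2.4]{dontchev1995implicit}). Take the generalized equation $-\mathcal F(x,w)\in N_K(w)$, i.e.\ $(w,-\mathcal F(x,w))\in\operatorname{gph}N_K$, with strong approximation $g(w)=\mathcal F(\bar x,\bar w)+\nabla_w\mathcal F(\bar x,\bar w)(w-\bar w)$. This $g$ is legitimate by (i): $\nabla_w\mathcal F$ exists and is continuous near $(\bar x,\bar w)$ because $f,P$ are $C^1$ and the Jacobian entries $\nabla_y f$, $\nabla_y P$ are locally Lipschitz; the $\lambda$-part is linear.

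By (i), $\mathcal F$ is also Lipschitz in $x$ uniformly in $w$ and Hadamard directionally differentiable in $x$ at $(\bar x,\bar w)$. The hypotheses of \cite{dontchev1995implicit} additionally require that the linearized map $z\mapsto\{w\mid 0\in g(w)-z+N_K(w)\}$ have a Hadamard single-valued localization. Here we use (ii) together with the remark that, for KKT systems of nonlinear programs with polyhedral $K=Y\times\mathbb R^m_+$, metric regularity implies strong metric regularity \cite[Section 4H]{dontchev2009implicit}. Strong metric regularity gives a Lipschitz single-valued localization of the linearized inverse. Since that inverse is a piecewise-affine (polyhedral) generalized equation, this localization is directionally differentiable, hence Hadamard differentiable because it is Lipschitz.

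Consequently $S_{KKT}$ has a Lipschitz single-valued localization $\omega=(\phi,\mu)$ around $\bar x$ for $\bar w$, Hadamard differentiable in direction $\bar u$ by the formula in Remark~\ref{rem:strongly_directional_inner_semicompactness_ift}.

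\textbf{Step 2 (from $S_{KKT}$ to $S$).} By convexity of $f(x,\cdot)$ and $P(x,\cdot)$, together with the assumed subregularity guaranteeing KKT multipliers, $y\in S(x)$ iff some $\lambda$ has $(y,\lambda)\in S_{KKT}(x)$. So $\phi(x)\in S(x)$ for $x$ near $\bar x$. To obtain $S(x)\cap U_{\bar y}=\{\phi(x)\}$ we argue by contradiction: take another $y'\in S(x)$ near $\bar y$ and an associated multiplier $\lambda'$. The difficulty is that $\lambda'$ must lie near $\bar\lambda$ to contradict local uniqueness of $\omega$. We would get this from the Lipschitz upper estimate of multipliers provided by metric regularity in (ii); alternatively, by convexity, the segment between $\phi(x)$ and $y'$ lies in $S(x)$, contradicting uniqueness locally. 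This is exactly the condition of Definition~\ref{defn:directionally_single_valued}, and Proposition~\ref{prop: single_valued_implies_semicompact} then yields the semicompactness with $\bar v=\phi'_H(\bar x;\bar u)$.

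\textbf{Step 3.} With (iii) supplying weak* strict Lipschitzness of $P$ and directional metric subregularity of $\Psi$, invoke Theorem~\ref{thm: directional subdifferential of V with sdic of s}. Its remaining hypothesis, Hadamard differentiability of $P$ in direction $(\bar u,\bar v)$, follows from the $C^1$ assumption in (i). That theorem delivers $\bar v\in C(\bar x,\bar y;\bar u)$ and both inclusions.

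The main obstacle is Step 1–2: justifying Hadamard differentiability of the localization of the linearized KKT inverse in the infinite-dimensional $Y$, and ensuring the multipliers of nearby solutions stay near $\bar\lambda$. The first attempt would be to rely on polyhedrality of $\mathbb R^m_+$, which confines the nonsmoothness to finitely many active-set pieces.
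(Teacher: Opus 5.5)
Your architecture matches the paper's. You get a Lipschitz, Hadamard directionally differentiable localization of $S_{KKT}$ from (ii), the KKT fact that metric regularity implies strong metric regularity, and Dontchev's implicit function theorem. You then pass to $S$, apply Proposition~\ref{prop: single_valued_implies_semicompact}, and finish with Theorem~\ref{thm: directional subdifferential of V with sdic of s}; your Step~3 is exactly the paper's final step.

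The genuine gap is in Step~1: the strong approximation $g(w)=\mathcal F(\bar x,\bar w)+\nabla_w\mathcal F(\bar x,\bar w)(w-\bar w)$ is not available under (i). The $y$-block of $\nabla_w\mathcal F$ consists of second derivatives of $f$ and $P$ in $y$. Condition (i) only makes $\nabla_y f$ and $\nabla_y P$ locally Lipschitz and Hadamard directionally differentiable at $(\bar x,\bar y)$, and a Lipschitz gradient need not be differentiable. For example, take $Y=\mathbb R$, an inactive constraint, and $f(x,y)=\tfrac12 y^2+\tfrac12\max\{y,0\}^2-xy$ at $(\bar x,\bar y)=(0,0)$. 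So your inference that $\nabla_w\mathcal F$ exists and is continuous is false, and two steps you rely on fail with it:
\begin{enumerate}
\item[(a)] transferring strong metric regularity from the nonlinear map in (ii) to $g+N_K$, which needs $g$ to be a strict first-order approximation of $\mathcal F(\bar x,\cdot)$ at $\bar w$;
\item[(b)] your piecewise-affine argument for Hadamard differentiability of the linearized inverse localization.
\end{enumerate}

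The paper never linearizes. It uses (ii) together with \cite[Sections~4H and~4I]{dontchev2009implicit} to get strong metric regularity of the nonlinear partial map $w\mapsto\mathcal F(\bar x,w)+N_K(w)$ itself. Its Lipschitz inverse localization is then fed directly into \cite[Theorem~2.4, Remark~2.6]{dontchev1995implicit}, in effect with $\mathcal F(\bar x,\cdot)$ in the role of $g$, and (i) is used only for the Lipschitz continuity and Hadamard directional differentiability of $\mathcal F$. You should follow that route. The directional differentiability then comes from Dontchev's result rather than from polyhedrality. The strong-approximation hypothesis (condition (ii) of Remark~\ref{rem:strongly_directional_inner_semicompactness_ift}) must be checked for $\mathcal F(x,\cdot)$ relative to $\mathcal F(\bar x,\cdot)$; the paper also leaves this implicit.

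In Step~2 you raise a point the paper passes over: it identifies the primal component of the $S_{KKT}$ localization with a localization of $S$ without comment. Neither of your fixes works as stated. The metric regularity in (ii) is at $x=\bar x$ and controls no multipliers of a second solution $y'\in S(x)$ with $x\neq\bar x$. The segment argument still needs multipliers near $\bar\lambda$ for points on $[\phi(x),y']$ before local uniqueness can be contradicted.

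What closes the gap is the saddle-point structure of the convex program. Let $\omega(x)=(\phi(x),\mu(x))$ and $L(x,y,\lambda)=f(x,y)+\lambda^\top P(x,y)$.
\begin{enumerate}
\item[(1)] $L(x,\cdot,\mu(x))$ is convex with zero gradient at $\phi(x)$, and $\mu(x)^\top P(x,\phi(x))=0$. Hence every feasible $y$ satisfies $f(x,y)\ge L(x,y,\mu(x))\ge L(x,\phi(x),\mu(x))=f(x,\phi(x))$, so $\phi(x)\in S(x)$ with no constraint qualification needed.
\item[(2)] If $y'\in S(x)$, then $L(x,y',\mu(x))\le f(x,y')=V(x)=\min_y L(x,y,\mu(x))$. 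So $y'$ also minimizes $L(x,\cdot,\mu(x))$ and $\mu(x)^\top P(x,y')=0$, i.e.\ $(y',\mu(x))\in S_{KKT}(x)$.
\item[(3)] Since $\mu(x)\to\bar\lambda$, local uniqueness of $\omega$ gives $y'=\phi(x)$ whenever $y'$ is close to $\bar y$ and $x$ is close to $\bar x$.
\end{enumerate}
This is exactly the localization property required in Definition~\ref{defn:directionally_single_valued}.
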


\begin{proof}
    Condition~(i) ensures that the base mapping $\mathcal{F}$ is locally Lipschitz continuous and Hadamard directionally differentiable at the reference point. By condition~(ii), the unperturbed KKT generalized equation is metrically regular at $\bar{w}$ for $0$. Due to the structural properties of KKT optimality systems \cite[Sections~4H and~4I.2]{dontchev2009implicit}, metric regularity in this setting implies strong metric regularity. Consequently, the
    mapping 
    \[
        w\mapsto\mathcal{F}(\bar{x},w)+N_K(w)
    \]
    admits a Lipschitz continuous single-valued inverse localization around $(\bar{w},0)$.

    Applying \cite[Theorem~2.4, Remark~2.6]{dontchev1995implicit} yields a locally Lipschitz continuous and Hadamard directionally differentiable
    localization $\Phi(x)=\bigl(\phi(x),\lambda(x)\bigr)$ 
    of the KKT solution mapping around $(\bar{x},\bar{w})$. More precisely, there exist neighborhoods $U_{\bar{x}}$ of $\bar{x}$ and $U_{\bar{w}}$ of $\bar{w}$ such that
    \[
        S_{KKT}(x)\cap U_{\bar{w}}=\{\Phi(x)\},
        \qquad x\in U_{\bar{x}},
    \]
    and $\Phi(\bar{x})=\bar{w}$.

Since the primal component $\phi$ is the primal component of the locally Lipschitz continuous and Hadamard directionally differentiable mapping $\Phi$, it is itself locally Lipschitz continuous and Hadamard directionally differentiable at $\bar{x}$. Thus $S$ admits a Hadamard single-valued localization at $(\bar{x},\bar{y})$ in the direction $\bar{u}$. By Proposition~\ref{prop: single_valued_implies_semicompact}, $S$ is strongly directionally inner semicompact at $(\bar{x},\bar{y})$ in the direction $(\bar{u},\bar{v})$, where $\bar{v}=\phi'_H(\bar{x};\bar{u})$. 

    Condition~(i) gives the required Hadamard directional differentiability
    of $P$, while condition~(iii) gives the weak* strict Lipschitzian
    property of $P$ and the directional metric subregularity of the
    original constraint mapping $\Psi=\Lambda-P$. Therefore all the
    assumptions of
    Theorem~\ref{thm: directional subdifferential of V with sdic of s}
    hold for the original problem \eqref{eq:example_of_sdic}. Applying that
    theorem gives $\bar{v}\in C(\bar{x},\bar{y};\bar{u})$ and the asserted
    upper estimates.
\end{proof}

We next record a verifiable sufficient condition for the metric regularity assumption in Corollary~\ref{cor: directional subdifferential kkt polyhedral}(ii). A related condition was applied in \cite[Theorem~5.1]{ye2000constraint}.  
In infinite dimensions, coderivative characterizations of metric regularity require additional sequential normal compactness, so we specialize to the finite-dimensional setting, where the Mordukhovich criterion applies directly.   

\begin{proposition}[NNAMCQ $\Rightarrow$ metric regularity]\label{prop: NNAMCQ metric regularity KKT}
Let $Y=\mathbb{R}^n$, $Z=\mathbb{R}^m$, $\Lambda=\mathbb{R}^m_-$, and $K:=\mathbb{R}^n\times\mathbb{R}^m_+$. 
Fix $\bar{x}\in X$ and a KKT point $\bar{w}=(\bar{y},\bar{\lambda})\in S_{KKT}(\bar{x})$, and define
\[
H(w)\ :=\ \mathcal{F}(\bar{x}, w) + N_K(w).
\]
Assume that $\mathcal{F}(\bar{x},\cdot)$ is continuously differentiable near $\bar{w}$. 
We say that the \emph{NNAMCQ for the KKT system} holds at $(\bar{w},0)$ if
\begin{equation}\label{eq:NNAMCQ_KKT}
    0\in \nabla_w\mathcal{F}(\bar{x},\bar{w})^*\xi + D^*N_K\bigl(\bar{w},-\mathcal{F}(\bar{x},\bar{w})\bigr)(\xi)
    \quad\Longrightarrow\quad
    \xi=0.
\end{equation}
Then $H$ is metrically regular at $\bar{w}$ for $0$. 
\end{proposition}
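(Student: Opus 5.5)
The plan is to apply the Mordukhovich coderivative criterion for metric regularity in finite dimensions. Since $Y=\mathbb{R}^n$ and $Z=\mathbb{R}^m$, the mapping $H:\mathbb{R}^{n+m}\rightrightarrows\mathbb{R}^{n+m}$ acts between finite-dimensional spaces. Its graph is locally closed near $(\bar w,0)$, because $\mathcal{F}(\bar x,\cdot)$ is continuous and $\operatorname{gph}N_K$ is closed ($K$ is closed and convex). By \cite[Theorem~4.18]{Mordukhovich2006variational} (equivalently the Rockafellar--Wets criterion), $H$ is metrically regular at $\bar w$ for $0$ if and only if
\[
\ker D^*H(\bar w,0)=\{0\},\qquad\text{i.e.}\qquad 0\in D^*H(\bar w,0)(\xi)\ \Longrightarrow\ \xi=0 .
\]
The proof therefore reduces to computing $D^*H(\bar w,0)$.

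For this computation I will use the coderivative sum rule for a smooth single-valued mapping plus a set-valued mapping \cite[Theorem~1.62]{Mordukhovich2006variational}. Write $H=g+G$ with $g:=\mathcal{F}(\bar x,\cdot)$, which is $C^1$ near $\bar w$, and $G:=N_K$. For every $\xi$ this sum rule gives the exact equality
\[
D^*H(\bar w,0)(\xi)=\nabla_w\mathcal{F}(\bar x,\bar w)^*\xi + D^*N_K\bigl(\bar w,-\mathcal{F}(\bar x,\bar w)\bigr)(\xi).
\]
The reference point for $N_K$ is correct: $0\in H(\bar w)$ means $-\mathcal{F}(\bar x,\bar w)\in N_K(\bar w)$, so $(\bar w,-\mathcal{F}(\bar x,\bar w))\in\operatorname{gph}N_K$. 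The justification is that the map $(w,v)\mapsto(w,v-g(w))$ is a $C^1$ diffeomorphism carrying $\operatorname{gph}H$ onto $\operatorname{gph}N_K$. Limiting normal cones transform under such diffeomorphisms by the adjoint of the Jacobian. This yields exactly the formula above, including the sign conventions $(x^*,-y^*)$ used in the definition of $D_N^*$.

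Substituting this into the kernel condition, $0\in D^*H(\bar w,0)(\xi)$ becomes precisely the left-hand side of \eqref{eq:NNAMCQ_KKT}, and NNAMCQ forces $\xi=0$. The coderivative criterion then yields metric regularity. No sequential normal compactness condition is needed because the spaces are finite-dimensional.

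The main point requiring care is the sign bookkeeping in the sum/diffeomorphism step. The adjoint must act on $\xi$ with the same sign as in \eqref{eq:NNAMCQ_KKT}, and the base point of $D^*N_K$ must be $-\mathcal{F}(\bar x,\bar w)$ rather than $\mathcal{F}(\bar x,\bar w)$. I will check both by writing the normal-cone transformation explicitly. Everything else is a direct citation.
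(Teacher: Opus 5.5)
Your proposal is correct and follows the paper's route: local closedness of $\operatorname{gph} H$, the exact coderivative sum rule for a smooth mapping plus a closed-graph multifunction giving $D^*H(\bar w,0)(\xi)=\nabla_w\mathcal{F}(\bar x,\bar w)^*\xi + D^*N_K\bigl(\bar w,-\mathcal{F}(\bar x,\bar w)\bigr)(\xi)$, and then the finite-dimensional Mordukhovich kernel criterion. The differences are cosmetic: you get closedness of $\operatorname{gph} N_K$ from convexity of $K$ rather than polyhedrality, you add a diffeomorphism justification for the sum rule, and you cite the book's version of the criterion instead of the 1993 paper.
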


\begin{proof}
Since $K$ is polyhedral, $\operatorname{gph} N_K$ is closed and $N_K$ is a polyhedral multifunction. 
As $\mathcal{F}(\bar{x},\cdot)$ is continuously differentiable, $\operatorname{gph} H$ is locally closed at $(\bar{w},0)$, and the coderivative sum rule for a smooth single-valued mapping and a set-valued mapping with closed graph yields \cite[Theorem~1.62]{Mordukhovich2006variational}, for every $\xi\in \mathbb{R}^n\times\mathbb{R}^m$,
\begin{equation}\label{eq:coderiv_sum_H}
    D^*H(\bar{w},0)(\xi)
    =
    \nabla_w\mathcal{F}(\bar{x},\bar{w})^*\xi + D^*N_K\bigl(\bar{w},-\mathcal{F}(\bar{x},\bar{w})\bigr)(\xi). 
\end{equation} 
By \eqref{eq:coderiv_sum_H}, \eqref{eq:NNAMCQ_KKT} is exactly $D^*H(\bar{w},0)^{-1}(0)=\{0\}$. 
Applying the Mordukhovich criterion for metric regularity \cite[Corollary 4.3]{mordukhovich1993complete} to $H$ at $(\bar{w},0)$ completes the proof. 
\end{proof}

While the strong directional inner semicompactness of the mapping $S$ provides a theoretically powerful framework, it imposes a rather stringent requirement that can be overly restrictive in many practical settings. We next introduce a weaker condition, termed \textit{directional $V$-flatness}.  

\begin{definition}[Directional $V$-Flatness]\label{defn: gamma V flatness}
$f$ is said to be \emph{directionally $V$-flat} relative to $\Gamma$ at $\bar{x}$ in a direction $\bar{u} \in X$  
if for any sequences $t_k \downarrow 0$ and $u_k \to \bar{u}$, there exist $\bar v\in Y, \bar y\in S(\bar x;\bar u)$, and $y_k \in \Gamma(\bar{x} + t_k u_k)$ satisfying $\frac{y_k - \bar{y}}{t_k} \to \bar{v}$ and 
\begin{equation}\label{eq: V flatness}
    \liminf_{k \to \infty} \frac{f(\bar{x} + t_k u_k, y_k) - V(\bar{x} + t_k u_k)}{t_k} = 0.
\end{equation}
\end{definition}

This property is instrumental when employing variational principles to derive directional optimality conditions. In such frameworks, one typically constructs a sequence of near-minimizers $\{y_k\}$. Directional $V$-flatness ensures that the discrepancy between such a sequence $\{y_k\}$ and a corresponding sequence of minimizers $\{z_k\}$ vanishes at a rate faster than $t_k$. 

Since $\bar{y} \in S(\bar{x})$, this condition is weaker than the strong directional inner semicompactness of $S$ at $(\bar{x}, \bar{y})$. Indeed, the latter would allow one to select $y_k \in S(x_k)$ directly, in which case $f(x_k, y_k) - V(x_k)$ vanishes identically. 
Next, we give a proposition implying directional $V$-flatness. 

\begin{proposition}\label{prop: first_order_V_flatness_verification}
Suppose $\bar{y} \in S(\bar{x})$ and the following conditions hold:
\begin{enumerate}
    \item[\rm (i)] The feasible mapping $\Gamma$ is strongly directionally inner semicompact at $(\bar{x}, \bar{y})$ in the direction $(\bar{u}, \bar{v})$ for some $\bar{v} \in Y$. 
    
    \item[\rm (ii)] $f$ is locally Lipschitz near $(\bar{x}, \bar{y})$ and is Hadamard directionally differentiable at $(\bar{x}, \bar{y})$ in the direction $(\bar{u}, \bar{v})$, meaning the following limit exists:
    \begin{equation*}
        f'_H(\bar{x}, \bar{y}; \bar{u}, \bar{v}) := \lim_{t \downarrow 0, \, u \to \bar{u}, \, v \to \bar{v}} \frac{f(\bar{x} + t u, \bar{y} + t v) - f(\bar{x}, \bar{y})}{t}.
    \end{equation*}

    \item[\rm (iii)] The value function $V$ satisfies the lower directional bound at $\bar{x}$ in the direction $\bar{u}$ with respect to $\bar{y}$ and $\bar{v}$, namely:
    \begin{equation*}
        \liminf_{t \downarrow 0, \, u \to \bar{u}} \frac{V(\bar{x} + t u) - V(\bar{x})}{t} \ge f'_H(\bar{x}, \bar{y}; \bar{u}, \bar{v}).
    \end{equation*}
\end{enumerate}
Then directional $V$-flatness holds at $\bar{x}$ in the direction $\bar{u}$ for the pair $(f, \Gamma)$.
\end{proposition}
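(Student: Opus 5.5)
Fix arbitrary sequences $t_k \downarrow 0$ and $u_k \to \bar u$, and set $x_k := \bar x + t_k u_k$. The plan is to take $\bar y$ and $\bar v$ from hypotheses (i)--(iii), build $y_k$ from (i), and verify \eqref{eq: V flatness} by squeezing the quotient between $0$ and a quantity whose limit superior is nonpositive.

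First we construct $y_k$. By (i), for $k$ large (so that $\Gamma(x_k)\neq\emptyset$, which is implicit in (i)) there exist $y_k \in \Gamma(x_k)$ with $(y_k-\bar y)/t_k \to \bar v$ along a subsequence. Definition~\ref{defn: gamma V flatness} only asks for a liminf, so passing to this subsequence is harmless: we relabel, and the value of the liminf can only be approached along it. Writing $y_k = \bar y + t_k v_k$ with $v_k \to \bar v$, hypothesis (ii) gives
\[
\frac{f(x_k,y_k)-f(\bar x,\bar y)}{t_k} \to f'_H(\bar x,\bar y;\bar u,\bar v).
\]

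Next we use the identity $f(\bar x,\bar y)=V(\bar x)$, which holds because $\bar y\in S(\bar x)$. It yields the decomposition
\[
\frac{f(x_k,y_k)-V(x_k)}{t_k} = \frac{f(x_k,y_k)-f(\bar x,\bar y)}{t_k} - \frac{V(x_k)-V(\bar x)}{t_k}.
\]
The first term converges to $f'_H(\bar x,\bar y;\bar u,\bar v)$. By (iii), the liminf of the second quotient is at least $f'_H$, so the limsup of the whole expression is at most $0$. Since $y_k\in\Gamma(x_k)$, we also have $f(x_k,y_k)\ge V(x_k)$, so every term is nonnegative. Hence the limit exists and equals $0$, and in particular \eqref{eq: V flatness} holds. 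Finiteness of $V(x_k)$ is guaranteed, because $\Gamma(x_k)\ne\emptyset$ and $V(x_k)\le f(x_k,y_k)<\infty$. Some care is needed if $V(x_k)=-\infty$, but (iii) rules this out for $k$ large.

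The remaining requirement, and the expected obstacle, is $\bar y\in S(\bar x;\bar u)$. This needs $z_k \in S(\bar x+t_k u_k)$ (along some sequences) with $z_k\to\bar y$, which the hypotheses do not obviously provide. The first thing to try is to exploit the fact that $y_k\to\bar y$ are near-minimizers with gap $o(t_k)$, together with the local Lipschitz property of $f$. If the paper's notion allows it, one could interpret the membership via such asymptotically optimal sequences. Otherwise, one would note that the argument above shows the sequence $y_k\to\bar y$ realizes the definition with an $o(t_k)$ optimality gap, and would argue, perhaps via Ekeland's variational principle on $\Gamma(x_k)$, that exact minimizers exist nearby. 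I would flag this step as needing either an additional assumption or the looser reading of $S(\bar x;\bar u)$. The core of the argument is the squeeze in the second step.
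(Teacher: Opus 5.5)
Your squeeze argument is exactly the paper's proof: nonnegativity of the quotient from $y_k\in\Gamma(x_k)$, the decomposition through $V(\bar x)=f(\bar x,\bar y)$, hypothesis (ii) for the first quotient and (iii) for the second. The paper stops there and never verifies $\bar y\in S(\bar x;\bar u)$, so the step you flagged is missing from its proof as well. Your suspicion is right, and no sharper argument can supply this step. From (i)--(iii) alone the conclusion, read with Definition~\ref{defn: gamma V flatness}, is false. Take $X=Y=\mathbb{R}$, $\Gamma\equiv\mathbb{R}$ (e.g.\ $P\equiv 0$, $\Lambda=\{0\}$), $\bar x=0$, $\bar u=1$, and
\[
f(x,y)=\frac{y^2}{1+y^4}-x^2\arctan y=\frac{y^2}{1+y^4}+x^2\Bigl(\frac{\pi}{2}-\arctan y\Bigr)-\frac{\pi}{2}x^2 .
\]
Then $S(0)=\{0\}$ and $V(0)=0$. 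For $x\neq 0$, the first two terms on the right are nonnegative, the second strictly so, and both tend to $0$ as $y\to+\infty$. Hence $V(x)=-\frac{\pi}{2}x^2$ and $S(x)=\emptyset$. With $\bar y=0$, $\bar v=0$ and $y_k:=0$, hypothesis (i) holds. Since $f$ is smooth with $\nabla f(0,0)=0$, we get $f'_H(0,0;1,0)=0$, so (ii) holds. The quotient in (iii) equals $-\frac{\pi}{2}tu^2\to 0$, so (iii) holds. Nevertheless $S(t_ku_k)=\emptyset$ for all large $k$, so $S(0;1)=\emptyset$ and no admissible $\bar y$ exists. This also shows why Ekeland's principle cannot help. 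It produces exact minimizers of a perturbed objective $f(x_k,\cdot)+\epsilon\|\cdot-\hat y\|$ on $\Gamma(x_k)$, not of $f(x_k,\cdot)$ itself, and the latter need not exist. The honest repair is one of the following:
\begin{enumerate}
\item[(a)] add $\bar y\in S(\bar x;\bar u)$ as a hypothesis, in which case your squeeze is a complete proof;
\item[(b)] weaken the definition to $\bar y\in S(\bar x)$;
\item[(c)] adopt your ``asymptotically optimal'' reading of $S(\bar x;\bar u)$, which your $y_k$ realize directly, since they are feasible, converge to $\bar y$, and have optimality gap $o(t_k)$.
\end{enumerate}

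There are two smaller points, which the paper also glosses over. First, nonemptiness of $\Gamma(x_k)$ is not implicit in (i), because strong directional inner semicompactness only concerns sequences along which $\Gamma(\bar x+t_ku_k)\neq\emptyset$. If, for instance, $P(x,y)=x$ and $\Lambda=(-\infty,0]$ with the same $f$, then $\Gamma(x)=\emptyset$ for $x>0$. In that case (i) is vacuous, (iii) holds because $V=+\infty$ there, and no $y_k$ exists at all. So feasibility of $\bar x+tu$ for small $t>0$ and $u$ near $\bar u$ must be assumed separately. Second, relabelling to a subsequence is not what the definition permits. It quantifies over the given $(t_k,u_k)$ and demands convergence of the whole sequence $(y_k-\bar y)/t_k$, and your remark about the liminf does not address this. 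The fix is to apply (i) to every subsequence, which shows $\delta_k:=\inf_{y\in\Gamma(x_k)}\|(y-\bar y)/t_k-\bar v\|\to 0$. Choosing $y_k\in\Gamma(x_k)$ with $\|(y_k-\bar y)/t_k-\bar v\|\le\delta_k+1/k$ then gives full convergence, and your squeeze runs along the original sequence.
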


\begin{proof}
Let $t_k \downarrow 0$ and $u_k \to \bar{u}$ be given. Denote $x_k := \bar{x} + t_k u_k$. 
By condition (i), the strong directional inner semicompactness of $\Gamma$ at $(\bar{x}, \bar{y})$ in the direction $(\bar{u}, \bar{v})$ guarantees the existence of a sequence $y_k \in \Gamma(x_k)$ such that 
\begin{equation*}
    v_k := \frac{y_k - \bar{y}}{t_k} \to \bar{v} \quad \text{as} \quad k \to \infty.
\end{equation*}
Since $y_k \in \Gamma(x_k)$, it follows that $V(x_k) \le f(x_k, y_k)$ for all $k$. Consequently, the quotient is non-negative:
\begin{equation}\label{eq:prop_non_negative}
    \frac{f(x_k, y_k) - V(x_k)}{t_k} \ge 0, \quad \forall k.
\end{equation}

To prove the limit converges to zero, we evaluate the upper limit. Since $\bar{y} \in S(\bar{x})$, we have $V(\bar{x}) = f(\bar{x}, \bar{y})$. We decompose the quotient as follows:
\begin{align}\label{eq:prop_limsup_decomp}
    \limsup_{k \to \infty} \frac{f(x_k, y_k) - V(x_k)}{t_k} 
    &= \limsup_{k \to \infty} \left[ \frac{f(x_k, y_k) - f(\bar{x}, \bar{y})}{t_k} - \frac{V(x_k) - V(\bar{x})}{t_k} \right] \nonumber \\
    &\le \limsup_{k \to \infty} \frac{f(\bar{x} + t_k u_k, \bar{y} + t_k v_k) - f(\bar{x}, \bar{y})}{t_k} - \liminf_{k \to \infty} \frac{V(x_k) - V(\bar{x})}{t_k}.
\end{align}

By condition (ii), the limit of the first term exists and equals the directional derivative:
\begin{equation*}
    \lim_{k \to \infty} \frac{f(\bar{x} + t_k u_k, \bar{y} + t_k v_k) - f(\bar{x}, \bar{y})}{t_k} = f'_H(\bar{x}, \bar{y}; \bar{u}, \bar{v}).
\end{equation*}

Applying condition (iii) to the second term yields:
\begin{equation*}
    \liminf_{k \to \infty} \frac{V(x_k) - V(\bar{x})}{t_k} \ge f'_H(\bar{x}, \bar{y}; \bar{u}, \bar{v}).
\end{equation*}

Substituting both bounds back into \eqref{eq:prop_limsup_decomp}, we obtain:
\begin{equation*}
    \limsup_{k \to \infty} \frac{f(x_k, y_k) - V(x_k)}{t_k} \le f'_H(\bar{x}, \bar{y}; \bar{u}, \bar{v}) - f'_H(\bar{x}, \bar{y}; \bar{u}, \bar{v}) = 0.
\end{equation*}
Combining this with \eqref{eq:prop_non_negative}, we conclude that the limit exists and equals zero, confirming that directional $V$-flatness is satisfied.
\end{proof}

\begin{remark}\label{rem:verification_cond_iii}
Condition (iii) requires the lower Dini directional derivative of the value function to be bounded below by the directional derivative of the objective function. Although directly evaluating the directional derivative of $V(x)$ is challenging, this lower bound can be verified using sensitivity analysis for parametric optimization.

Depending on the problem structure and regularity conditions, Condition (iii) can be verified by ensuring $f'_H(\bar{x}, \bar{y}; \bar{u}, \bar{v})$ is bounded above by expressions involving the Lagrangian $L(x,y,\lambda)$ and either the limiting multiplier set $\Lambda(x,y)$ or the Clarke multiplier set $\Lambda^c(x,y)$: 

\begin{itemize}
    \item For parametric nonlinear programs, classical results by Gauvin and Dubeau \cite[Corollary 4.3]{gauvin1982differential} provide a lower bound over the full solution set $S(\bar{x})$. Under the uniform compactness of $S(x)$ near $\bar{x}$ and the Mangasarian-Fromovitz constraint qualification at each $y \in S(\bar{x})$,  
    \begin{equation*}
        \liminf_{t \downarrow 0, \, u \to \bar{u}} \frac{V(\bar{x} + t u) - V(\bar{x})}{t} \ge \inf_{y \in S(\bar{x})} \min_{\lambda \in \Lambda(\bar{x}, y)} \nabla_x L(\bar{x}, y, \lambda)\bar{u}.
    \end{equation*}
    
    \item Recent developments \cite[Section 4]{bai2024directional} tighten these bounds by optimizing over the directional solution set $S(\bar{x}; \bar{u})$. For general closed constraint sets and under directional Robinson stability, the lower bound is expressed using the Clarke multiplier set $\Lambda^c(\bar{x}, y)$:
    \begin{equation*}
        \liminf_{t \downarrow 0, \, u \to \bar{u}} \frac{V(\bar{x} + t u) - V(\bar{x})}{t} \ge \min_{y \in S(\bar{x}; \bar{u}) \cap \Omega_{\bar{u}}} \min_{\lambda \in \Lambda^c(\bar{x}, y)} \nabla_x L(\bar{x}, y, \lambda)\bar{u}.
    \end{equation*}

    \item If the constraint set is closed and convex, the Clarke multiplier set can be replaced by the sharper limiting multiplier set $\Lambda(\bar{x}, y)$ \cite{bai2024directional}:
    \begin{equation*}
        \liminf_{t \downarrow 0, \, u \to \bar{u}} \frac{V(\bar{x} + t u) - V(\bar{x})}{t} \ge \min_{y \in S(\bar{x}; \bar{u}) \cap \Omega_{\bar{u}}} \min_{\lambda \in \Lambda(\bar{x}, y)} \nabla_x L(\bar{x}, y, \lambda)\bar{u}.
    \end{equation*}

\end{itemize}

Furthermore, analogous lower bounds can be explicitly established for specifically structured problems, such as parametric quadratic programs \cite[Theorem 4.1]{Tam2001directional}, \cite[Theorem 4]{Dong2024directional}.
\end{remark}

\begin{example}\label{ex: V_flatness_verification}

Let $x \in \mathbb{R}$ and $y = (y_1, y_2) \in \mathbb{R}^2$. Consider the following parametric quadratic programming problem:
\begin{equation*}
    \min_{y \in \mathbb{R}^2} \quad f(x, y) = y_1 + \frac{1}{2}y_2^2 \quad \text{s.t.} \quad g(x, y) = \frac{1}{2}y_1^2 + \frac{1}{2}y_2^2 - \frac{1}{2} + x \le 0.
\end{equation*}

Set $\bar{x} = 0$ and $\bar{u} = 1$.  
When $\bar{x} = 0$, the feasible set is $y_1^2 + y_2^2 \le 1$. Minimizing $y_1 + \frac{1}{2}y_2^2$ is uniquely achieved on the boundary, yielding the optimal solution $\bar{y} = (-1, 0)$ and $V(0) = -1$.
The constraint is active at $\bar{y}$. The Lagrangian function is defined as:
\begin{equation*}
    L(x, y, \lambda) = y_1 + \frac{1}{2}y_2^2 + \lambda \left(\frac{1}{2}y_1^2 + \frac{1}{2}y_2^2 - \frac{1}{2} + x\right).
\end{equation*}
From the first-order KKT condition $\nabla_y L(\bar{x}, \bar{y}, \bar{\lambda}) = (1 - \bar{\lambda}, 0) = (0, 0)$, we obtain the unique Lagrange multiplier $\bar{\lambda} = 1$, meaning $\Lambda(0, \bar{y}) = \{1\}$.

For a small positive perturbation $x_t = t > 0$, the feasible set updates to $y_1^2 + y_2^2 \le 1 - 2t$. The exact optimal solution becomes $y(t) = (-\sqrt{1-2t}, 0)$, and the optimal value function is $V(t) = -\sqrt{1-2t}$. We now verify the three conditions of the proposition:

\begin{enumerate}
    \item[\rm (i)] 
We choose $y_t := y(t) = \bigl(-\sqrt{1-2t},0\bigr) \in \Gamma(x_t)$, which defines a Hadamard directionally differentiable single-valued localization. Hence, by Proposition~\ref{prop: single_valued_implies_semicompact}, the strong directional inner semicompactness condition is satisfied.

    \item[\rm (ii)] 
    $f(x, y) = y_1 + \frac{1}{2}y_2^2$ is smooth ($C^\infty$). Its joint directional derivative is given by:
    \begin{equation*}
        f'_H(\bar{x}, \bar{y}; \bar{u}, \bar{v}) = \nabla_x f \cdot \bar{u} + \nabla_y f \cdot \bar{v} = 0 \cdot 1 + (1, 0) \cdot (1, 0) = 1.
    \end{equation*}
    Hence, Condition (ii) holds with a directional derivative value of $1$.

    \item[\rm (iii)] The assumptions of \cite[Corollary~4.3]{gauvin1982differential} are satisfied at $(0,-1,0)$. Therefore,
    \begin{equation*}
            V'(0; 1) \geq \min_{y \in S(0)} \min_{\lambda \in \Lambda(0, -1,0)} \nabla_x L(0, -1, 0, \lambda) \cdot \bar{u}=1, 
        \end{equation*}
        with $\Lambda(0, -1,0) = \{1\}$. Thus $V'(0; 1)= f'_H(\bar x,\bar y;\bar u,\bar v)$. 
\end{enumerate}
\end{example}

Theorem~\ref{thm: directional subdifferential of V} requires weaker assumptions than \cite[Theorem~5.10]{long2017calculus}, which assumes strong directional inner semicompactness.   

\begin{theorem}\label{thm: directional subdifferential of V}
Let $\bar{x} \in X$ and a direction $\bar u \in X$ be given. Suppose $(f, \Gamma)$ satisfies directional $V$-flatness and $V$ satisfies directional restricted inf-compactness with $\Omega_{\bar u}$ at $\bar{x}$ in the direction $\bar u$. Assume that, for each $\bar{y} \in S(\bar{x};\bar u) \cap \Omega_{\bar u}$ and each direction $\bar v(\bar{y}) \in Y$ produced by this directional $V$-flatness, $P$ is Hadamard differentiable at $(\bar{x}, \bar{y})$ in the direction $(\bar u, \bar v(\bar{y}))$ and weak* strictly Lipschitzian at $(\bar{x}, \bar y)$, and the set-valued mapping $\Psi(x, y) := \Lambda - P(x, y)$ is metrically subregular at $((\bar{x}, \bar{y}), 0)$ in the direction $(\bar u, \bar v(\bar{y}))$. Then 
\begin{equation*}
    \partial V(\bar{x}; \bar u) \subset  \bigcup\limits_{\bar y\in S(\bar x; \bar u)\bigcap \Omega_{\bar u}} \left\{ x^* \;\middle|\;
    \begin{aligned}
        &(x^*, 0) \in \partial f(\bar{x}, \bar{y}; \bar u, \bar v(\bar{y})) + \partial \langle z^*, P \rangle(\bar{x}, \bar{y}; \bar u, \bar v(\bar{y})), \\
        &z^* \in \mathcal{M}_P(\bar{x}, \bar{y}; \Lambda; \bar{u}, \bar{v}(\bar y))
    \end{aligned}
    \right\}, 
\end{equation*}
\begin{equation*}
\begin{split}
    \partial^\infty V(\bar{x}; \bar u) \subset \bigcup\limits_{\bar y\in S(\bar x;\bar u)\bigcap \Omega_{\bar u}}\{ x^* ~\mid ~ 
 (x^*, 0) \in &\partial \langle z^*, P \rangle((\bar{x}, \bar{y}); (\bar u, \bar v(\bar{y}))),~ z^* \in \mathcal{M}_P(\bar{x}, \bar{y};\Lambda; \bar u, \bar{v}(\bar y))
    \}.
\end{split}
\end{equation*}
\end{theorem}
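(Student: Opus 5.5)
The plan is to follow the proof of Theorem~\ref{thm: directional subdifferential of V with sdic of s}. The one change is that exact minimizers $y_k\in S(x_k)$ are replaced by the near-minimizers supplied by directional $V$-flatness. Their defect is then absorbed into the error term of a variational principle.

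Take $x^*\in\partial V(\bar x;\bar u)$. Then there are $t_k\downarrow 0$, $u_k\to\bar u$ and $x_k:=\bar x+t_ku_k$ with $V(x_k)\to V(\bar x)$ and $x_k^*\in\hat\partial V(x_k)$, $x_k^*\xrightarrow{w^*}x^*$. The $\varepsilon$-dependence is dropped because Theorem~\ref{theorem of lsc for V} makes $V$ lower semicontinuous in direction $\bar u$.

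Apply directional $V$-flatness to $(t_k,u_k)$. This gives $\bar y\in S(\bar x;\bar u)$, $\bar v=\bar v(\bar y)$ and $y_k\in\Gamma(x_k)$ with $(y_k-\bar y)/t_k\to\bar v$ and, along a subsequence,
\[
\sigma_k:=f(x_k,y_k)-V(x_k)=o(t_k).
\]
I would also use directional restricted inf-compactness to ensure that $\bar y$ can be taken in $\Omega_{\bar u}$. Concretely, $S(x_k)\cap\Omega_{\bar u}\neq\emptyset$ for large $k$, and a norm-compactness argument places the cluster point in $S(\bar x;\bar u)\cap\Omega_{\bar u}$. I expect this step to need care: it amounts to reading the flatness definition as producing $\bar y$ compatible with $\Omega_{\bar u}$, and I would state that interpretation explicitly.

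Next, since $x_k^*\in\hat\partial V(x_k)$, for any $\eta_k\downarrow0$ there is $r_k>0$ with $V(x)\ge V(x_k)+\langle x_k^*,x-x_k\rangle-\eta_k\|x-x_k\|$ on $B_{r_k}(x_k)$. Define
\[
\phi_k(x,y):=f(x,y)-\langle x_k^*,x-x_k\rangle+\eta_k\|x-x_k\|+\delta_\Lambda(P(x,y))+\delta_{B_{r_k}(x_k)\times Y}(x,y).
\]
Then $\phi_k(x,y)\ge V(x_k)$, while $\phi_k(x_k,y_k)=V(x_k)+\sigma_k$, so $(x_k,y_k)$ is a $\sigma_k$-minimizer of $\phi_k$. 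I would apply the Ekeland variational principle with parameter $\lambda_k:=\sqrt{\sigma_k t_k}$; if $\sigma_k=0$, take $\lambda_k:=t_k^2$ instead. This yields $(\tilde x_k,\tilde y_k)$ with $\|(\tilde x_k,\tilde y_k)-(x_k,y_k)\|\le\lambda_k=o(t_k)$, which is a local minimizer of $\phi_k+(\sigma_k/\lambda_k)\|\cdot-(\tilde x_k,\tilde y_k)\|$. Here $\sigma_k/\lambda_k=\sqrt{\sigma_k/t_k}\to0$.

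This is the key point: $o(t_k)$ displacement and a vanishing perturbation modulus together preserve the directional limits $(\bar u,\bar v)$. For large $k$ the point lies in the interior of the ball, so that indicator drops out. One must also check that $f(\tilde x_k,\tilde y_k)\to f(\bar x,\bar y)$ and $V$-values converge; this follows from the Lipschitz continuity of $f$ and $\sigma_k\to0$.

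Now apply the fuzzy sum rule \cite[Theorem~2.33(b)]{Mordukhovich2006variational} with tolerance $t_k^2$, exactly as in the proof of Theorem~\ref{thm: directional subdifferential of V with sdic of s}. This gives points converging to $(\bar x,\bar y)$ in direction $(\bar u,\bar v)$ and Fréchet subgradients of $f$ and of $\delta_\Lambda\circ P$ summing to $(x_k^*,0)$ up to vanishing errors. The $f$-subgradients are bounded since $f$ is Lipschitz, so we can pass to weak* limits. Asplundness gives sequential weak* compactness of bounded sets, so
\[
(x^*,0)\in\partial f(\bar x,\bar y;\bar u,\bar v)+\partial(\delta_\Lambda\circ P)(\bar x,\bar y;\bar u,\bar v).
\]
Then apply Theorem~\ref{thm: chain rule} with $g=\delta_\Lambda$, using the assumed Hadamard differentiability, weak* strict Lipschitzness and directional metric subregularity of $\Psi$. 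This converts the second term into $\partial\langle z^*,P\rangle(\bar x,\bar y;\bar u,\bar v)$ with $z^*\in N(P(\bar x,\bar y);\Lambda;d)$, where $d=P'_H$. Moreover $d\in T(P(\bar x,\bar y);\Lambda)$, because $P(\tilde x_k,\tilde y_k)\in\Lambda$; hence $z^*\in\mathcal M_P$.

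For $\partial^\infty V(\bar x;\bar u)$ I would repeat the argument with scaled subgradients $\ell_kx_k^*$, $\ell_k\downarrow0$. Multiplying the optimality inclusion by $\ell_k$ kills the $f$-term in the limit, since $\ell_k\hat\partial f$ is bounded times $\ell_k$. The main obstacle throughout is the Ekeland step: the near-minimality defect must be $o(t_k)$ relative to the directional scaling, and that is precisely what flatness provides.
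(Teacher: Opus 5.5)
Your outline is the same as the paper's: near-minimizers from flatness, a variational principle with radius $\lambda_k=\sqrt{\sigma_k t_k}$, the fuzzy sum rule, then Theorem~\ref{thm: chain rule}. It fails at the same step the paper's proof does. From $x_k^*\in\hat\partial V(x_k)$ you only get $\phi_k\ge V(x_k)$ on $B_{r_k}(x_k)\times Y$. The radius $r_k$ is set by the fine structure of $V$ near $x_k$ and has no relation to the flatness defect $\sigma_k$. The paper treats the $\eta_k$-minimality of $(x_k,y_k)$ as global without justification. Your claim that the Ekeland point is interior to the ball for large $k$ needs $\lambda_k<r_k$, and nothing provides that. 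If you instead cap the Ekeland radius at $r_k/2$, the perturbation modulus becomes $2\sigma_k/r_k$, which need not tend to $0$. Flatness measures $\sigma_k$ against $t_k$, not against the scale on which the Fr\'echet inequality for $V$ holds. A second, separate issue is the one you flagged yourself: the $\bar y$ produced by flatness need not be the cluster point in $\Omega_{\bar u}$ given by inf-compactness. That also remains unresolved, and the paper glosses over it too.

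The gap cannot be patched, because the inclusion itself fails. Take $X=Y=Z=\mathbb{R}$, $P\equiv 0$, $\Lambda=\mathbb{R}$. All constraint hypotheses then hold trivially, and the right-hand side reduces to $\{x^*\mid (x^*,0)\in\partial f(\bar x,\bar y;\bar u,\bar v)\}$. Let $\bar x=0$ and $\bar u=1$. Let $A\subset(0,1)$ be discrete, accumulating only at $0$, with $d_A(x):=\operatorname{dist}(x,A)\le x^3$, and set
\[
f(x,y)=\min\{(|y|-x)^+,\ |y^2-x|\}-\min\{d_A(x),\ (|y|-2x)^+\}.
\]
This example has the following properties.

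\begin{itemize}
\item $f$ is locally Lipschitz, and $V(x)=-d_A(x)$ for small $x\ge 0$, attained at $y=\pm\sqrt{x}$.
\item $S(0;1)=\{0\}$, and inf-compactness holds with $\Omega_{\bar u}=[-1,1]$.
\item Flatness holds with $y_k=0$ and $\bar v=0$, since $f(x,0)-V(x)=d_A(x)=o(x)$.
\item Every flatness direction has $|\bar v|\le 1$. Indeed $f-V$ is at least the first term of $f$, which is about $t_k\min\{|\bar v|-1,1\}$ when $y_k/t_k\to\bar v$ with $|\bar v|>1$.
\item Near points $(tu,tv)$ with $u\to1$ and $|v|\to|\bar v|\le 1$ one has $|y|<2x$. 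So $f$ coincides locally with $0$ or with $(|y|-x)^+$, and $\{x^*\mid (x^*,0)\in\partial f(0,0;1,\bar v)\}=\{0\}$ for every admissible $\bar v$.
\item At the midpoint $m$ of a gap of $A$ of half-length $\rho$, one has $V(x)=|x-m|-\rho$ for $|x-m|\le\rho$. Hence $\hat\partial V(m)=[-1,1]$, and so $[-1,1]\subset\partial V(0;1)$.
\end{itemize}

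This is exactly the mechanism described above. With $y_k=0$ one has $\sigma_k=\rho_k$ and $\lambda_k=\sqrt{\rho_k m_k}\gg\rho_k$. Meanwhile, for $x^*=1/2$, the Fr\'echet inequality at $m_k$ holds only on a ball of radius $O(\rho_k)$. Theorem~\ref{thm: directional subdifferential of V with sdic of s} avoids this because it uses exact minimizers, for which local minimality of $(x_k,y_k)$ is immediate. Your argument would need either exact minimizers or an extra hypothesis tying the flatness defect to this localization scale.
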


\begin{proof}
Let $x^* \in \partial V(\bar{x}; \bar u)$. By the definition of the directional subdifferential in Asplund spaces (cf. \cite[Theorem 2.34]{Mordukhovich2006variational}), there exist sequences $t_k \downarrow 0$, $u_k \to \bar u$, and $x_k = \bar{x} + t_k u_k$ such that $V(x_k) < V(\bar{x}) + \varepsilon_k$ with $\varepsilon_k \downarrow 0$, and $x_k^* \in \hat{\partial} V(x_k)$ such that $x_k^* \xrightarrow{w^*} x^*$. Because of the directional restricted inf-compactness of $V$, there exists $z_k \in S(x_k) \cap \Omega_{\bar u}, \bar y\in S(\bar{x};\bar u)\cap \Omega_{\bar u}$ such that $z_k \to \bar{y}$. 

Since $f(x_k, z_k) = V(x_k)$, the pair $(x_k, z_k)$ is a local minimizer of the problem:
\begin{equation*}
    \min_{x, z} \left\{ f(x, z) - \langle x_k^*, x - x_k \rangle + \varepsilon_k \|x - x_k\| + \delta_\Lambda(P(x, z)) \right\}.
\end{equation*}
By directional $V$-flatness, we can find (after passing to a subsequence) $y_k \in \Gamma(x_k)$ and $v(\bar y)\in Y$ such that $\frac{y_k - \bar{y}}{t_k} \to \bar v(\bar y)$. Define 
$\eta_k := |f(x_k, y_k) - f(x_k, z_k)| = |f(x_k, y_k) - V(x_k)|$. 
Then $(x_k, y_k)$ is an $\eta_k$-minimizer of the following functional:
\begin{equation*}
    \phi_k(x, y) := f(x, y) - \langle x_k^*, x - x_k \rangle + \varepsilon_k \|x - x_k\| + \delta_\Lambda(P(x, y)).
\end{equation*}
Directional $V$-flatness of $(f, \Gamma)$ ensures that $\frac{\eta_k}{t_k} \to 0$. Setting $\lambda_k := \sqrt{t_k \eta_k}$, we have $\frac{\eta_k}{\lambda_k} \to 0$ and $\frac{\lambda_k}{t_k} \to 0$. Applying the subdifferential variational principle in Asplund spaces \cite[Theorem~2.28]{Mordukhovich2006variational}, there exist $(h_{1,k}, h_{2,k}) \in X \times Y$ and $(h_{1,k}^*, h_{2,k}^*) \in \hat{\partial} \phi_k(x_k + h_{1,k}, y_k + h_{2,k})$ such that:
\begin{itemize}
    \item[(1)] $(x_k + h_{1,k}, y_k + h_{2,k})$ is feasible, i.e., $P(x_k + h_{1,k}, y_k + h_{2,k}) \in \Lambda$;
    \item[(2)] $\|(h_{1,k}, h_{2,k})\| \leq \lambda_k$, which implies $\frac{(x_k + h_{1,k}) - \bar{x}}{t_k} \to \bar u$ and $\frac{(y_k + h_{2,k}) - \bar{y}}{t_k} \to \bar v(\bar y)$;
    \item[(3)] $\|(h_{1,k}^*, h_{2,k}^*)\| \leq \frac{\eta_k}{\lambda_k} \to 0$.
\end{itemize}

From the inclusion $(h_{1,k}^*, h_{2,k}^*) \in \hat{\partial} \phi_k(x_k + h_{1,k}, y_k + h_{2,k})$, we apply the fuzzy sum rule for Fr\'echet subdifferentials in Asplund spaces \cite[Theorem~2.33(b)]{Mordukhovich2006variational} (setting $\gamma=\epsilon := \lambda_k^2$ therein). This guarantees the existence of $e_{1,k} \in B_X$ and $e_{2,k} \in B_Y$ such that, by defining $(q_{1,k}, q_{2,k}) := (x_k + h_{1,k} + \lambda_k^2 e_{1,k}, y_k + h_{2,k} + \lambda_k^2 e_{2,k})$, we have 
\begin{equation*}
    (h_{1,k}^*, h_{2,k}^*) \in \hat{\partial} f(q_{1,k}, q_{2,k}) - (x_k^*, 0) + (\varepsilon_k + 2\lambda_k^2) \mathbb{B}^*_X \times \{0\} + \hat{\partial}(\delta_\Lambda \circ P)(q_{1,k}, q_{2,k}).
\end{equation*}
Observe that the difference quotients still converge to the same limits, i.e., $\frac{q_{1,k} - \bar{x}}{t_k} \to \bar{u}$ and $\frac{q_{2,k} - \bar{y}}{t_k} \to \bar{v}(\bar{y})$. 
Passing to the limit as $k \to \infty$ and utilizing the definition of the directional limiting subdifferential along the direction $(\bar{u}, \bar{v}(\bar{y}))$, we obtain: 
\begin{equation*}
    (x^*, 0) \in \partial f(\bar{x}, \bar{y}; \bar{u}, \bar{v}(\bar{y})) + \partial(\delta_\Lambda \circ P)(\bar{x}, \bar{y}; \bar{u}, \bar{v}(\bar{y})).
\end{equation*}

Finally, since $P$ is Hadamard differentiable and the metric subregularity holds, the chain rule (Theorem \ref{thm: chain rule}) yields: for $d := P'_H((\bar{x}, \bar{y}); (\bar u, \bar v(\bar{y}))) \in T(P(\bar{x}, \bar{y});\Lambda)$, we have 
\begin{equation*}
    (x^*, 0) \in \partial f((\bar{x}, \bar{y}); (\bar u, \bar v(\bar y))) + \partial \langle z^*, P \rangle((\bar{x}, \bar{y}); (\bar u, \bar v(\bar y))), \quad z^* \in N(P(\bar{x}, \bar{y}); \Lambda; d).
\end{equation*}
This completes the proof of the upper estimate for \(\partial V(\bar{x}; \bar{u})\). The upper estimate for \(\partial^\infty V(\bar{x}; \bar{u})\) follows analogously by deploying an auxiliary sequence \(\ell_k \downarrow 0\) such that \(\ell_k x_k^* \xrightarrow{w^*} x^*\) in the preceding arguments, and the redundant details are thus omitted. 
\end{proof}

\begin{remark}\label{rem: critical_cone}
    In the finite-dimensional framework established in \cite[Theorem 3.1]{Bai2023Directional}, the direction $\bar{v}(\bar{y})$ is shown to belong to the critical cone $C(\bar{x}, \bar{y}; \bar{u})$. However, in Theorem~\ref{thm: directional subdifferential of V}, we can only guarantee that $\bar{v}(\bar{y}) \in \mathbb{L}(\bar{x}, \bar{y}; \bar{u})$. This discrepancy arises because, in the infinite-dimensional setting, we cannot guarantee the existence of a sequence $y_k \in S(x_k)$ satisfying $\frac{y_k - \bar{y}}{t_k} \to \bar{v}(\bar{y})$. Consequently, we are unable to establish the following sequence of estimates:
    \begin{equation*}
        f'_{-}((\bar{x}, \bar{y}); (\bar{u}, \bar{v})) \leq \lim_{k \to \infty} \frac{f(\bar{x} + t_k u_k, y_k) - f(\bar{x}, \bar{y})}{t_k} = \lim_{k \to \infty} \frac{V(\bar{x} + t_k u_k) - V(\bar{x})}{t_k} \leq V'_{+}(\bar{x}; \bar{u}).
    \end{equation*} 
\end{remark}

By contrast, the absence of strong compactness in Asplund spaces precludes the use of classical sequential techniques in Theorem~\ref{thm: directional subdifferential of V}, making this critical cone restriction unattainable. 

Characterizations of $\partial V(\bar{x}; \bar u)$ and $\partial^\infty V(\bar{x}; \bar u)$ were investigated in \cite{long2017calculus} for general Banach spaces.  While their approach does not necessitate the Hadamard differentiability of $P$, the resulting expressions remain somewhat implicit, as they are formulated in terms of the coderivative $D_N^* \Gamma$ and involve auxiliary functions. 

In contrast, by leveraging the Hadamard differentiability of $P$ and the directional normal cone calculus established in Theorem \ref{thm: directional normal cones of preimage sets}, 
Theorem \ref{thm: directional subdifferential of V}, and Theorem \ref{thm: directional subdifferential of V with sdic of s} extend the results in \cite[Theorems 5.10, 5.11]{long2017calculus} by relaxing the assumptions on the solution mapping $S$ and providing a scalarization of coderivatives within the framework of Asplund spaces. 

Although this explicit characterization requires $P$ to be Hadamard differentiable, such a requirement remains sufficiently broad for practical applications; indeed, many standard works, such as \cite[Subsection 1.4.2]{hinze2008optimization}, impose the significantly more restrictive condition of strict Fr\'{e}chet differentiability. 

Furthermore, Theorem~\ref{thm: directional subdifferential of V} generalizes the finite-dimensional results of \cite[Theorems~3.1(iv) and~3.2(iv)]{Bai2023Directional} to the Asplund space setting. In finite dimensions, boundedness inherently implies compactness, a luxury not available in general Asplund spaces. By addressing the lack of automatic compactness through our refined variational analysis, our estimates provide a more general perspective on directional subdifferentials.

\vskip 6mm

\noindent

\bibliographystyle{plain}
\bibliography{sn-bibliography}

\end{document}